\newtheorem{theorem}{Theorem}
\newtheorem{corollary}{Corollary}
\newtheorem{proposition}{Proposition}
\newtheorem{lemma}{Lemma}
\title{
\textbf{Generalized scale functions for spectrally negative L\'evy processes}}
\author{\textbf{ Jes\'us Contreras\footnote{Centro de Investigaci\'on en Matem\'aticas A.C. Calle Jalisco s/n. C.P. 36240, {\sc Guanajuato, Mexico.}  Email: jjcontreras@cimat.mx.}\,\,,  V{\'i}ctor Rivero\footnote{Centro de Investigaci\'on en Matem\'aticas A.C. Calle Jalisco s/n. C.P. 36240, {\sc Guanajuato, Mexico.}  Email: rivero@cimat.mx.}\, .}}
\date{\footnotesize This version: \today}
\begin{document}
\maketitle

\begin{abstract}
\bigskip

For a spectrally negative L\'evy process, scale functions appear in the solution of two-sided exit problems, and in particular in relation with the Laplace transform of the first time it exits a closed interval. In this paper, we consider the Laplace transform of more general functionals, which can depend simultaneously on the values of the process and its supremum up to the exit time. These quantities will be expressed in terms of generalized scale functions, which can be defined using excursion theory. In the case the functional does not depend on the supremum, these scale functions coincide with the ones found on the literature, see e.g. \cite{li2018fluctuations} and therefore the results in this work are an extension of them.
\bigskip

\noindent {\sc Key words}:  Scale functions, Local times,  L\'evy processes, Fluctuation theory.\\
\noindent MSC 2010 subject classifications: 60J99, 60G51.
\end{abstract}

\vspace{0.5cm}

\section{Introduction}

Let $X=(X_t,t\geq 0)$ be a real-valued spectrally negative Lévy process (SNLP for short), that is, a stochastic process with independent and stationary increments and no positive jumps. These properties allow to define the Laplace exponent $\Psi$ of $X$, which is given by $$\mathbb{E}\left[e^{\lambda X_t} \right]= e^{t \Psi(\lambda)}, \ \ \ \lambda,t \geq 0,$$ and by the L\'evy-Khintchine formula, $$\Psi(\lambda)=a\lambda+\frac{\sigma^2}{2}\lambda^2 + \int_{(-\infty,0)} \Pi(dx) \left(e^{\lambda x}-1-\lambda x 1_{\{|x|<1\}} \right),$$ where $a,\sigma \in \mathbb{R}$ and $\Pi$ is a $\sigma-$finite measure over $(-\infty,0)$ satisfying the condition $$\int_{(-\infty,0)} \Pi (dx) (1\wedge x^2)  <\infty.$$ Actually, $\Psi$ characterizes $X$ and codes various informations about it.  

Indeed, the right derivative of $\Psi$ at 0 provides the asymptotic behavior of $X$:
\begin{itemize}
\item[i)] $\Psi'(0+)>0$ if and only if $X_t\to \infty$, a.s.,
\item[ii)] $\Psi'(0+)<0$ if and only if $X_t\to -\infty$, a.s. and
\item[iii)] $\Psi'(0+)=0$ if and only if $\limsup_{t\to \infty} X_t= \infty$ and $\liminf_{t\to \infty} X_t= -\infty$, a.s.
\end{itemize}

Since $\Psi$ is strictly convex and $\Psi(0)=0$, either $0$ is the only root of the equation $\Psi(\lambda)=0,$ or there is an additional root. Denote by $\Phi(0)$ the largest non-negative root. From this value on, $\Psi$ is continuous and strictly increasing and so we can define its inverse $\Phi:[0,\infty)\to [\Phi(0),\infty)$.

\underline{\textit{Assumption:}} We will assume throughout this paper that $X$ has paths of unbounded variation, which for the spectrally negative L\'evy process is equivalent to require either $\sigma\neq 0$ or $\int_{-1}^0 \Pi(dx) |x|=\infty$.

Our main goal is to generalize the classical scale functions for $X$ with an excursion theory approach. For $q\geq 0$, denote by $W^{(q)}$ and $Z^{(q)}$, the usual $q-$scale functions for $X$. The function $W^{(q)}:\mathbb{R}\to [0,\infty)$ is such that $W^{(q)}(x)=0$ for $x<0$ and over $[0,\infty)$ it is the only strictly increasing and continuous function whose Laplace transform is given by $$\int_0^\infty dx e^{-\beta x} W^{(q)}(x) = \frac{1}{\Psi(\beta)-q}, \ \ \ \beta>\Phi(q).$$ On the other hand, $Z^{(q)}$ is defined by $$Z^{(q)}(x)=1+q\int_0^x dy W^{(q)}(y), \ \ \ x\in \mathbb{R}.$$ We simply write $W$ and $Z$ for the scale functions associated to $q=0$.  These functions, its extensions and applications are constantly applied in recent research projects, and it becomes imposible nowadays to give a panorama of the topic. Nevertheless, for a deeper insight about these functions we refer to \cite[Ch. VIII]{kyprianou2014fluctuations} and \cite{kuznetsov2012theory}, where an overview of the topic can be found.

Denote by $S_t$ the current supremum of $X$ up to time $t>0$. Scale functions first appeared while studying the exit problem of an interval for $X$. Before stating that problem, we introduce the first passage times above and below a given level $c\in \mathbb{R}$: $$\tau_c^+=\inf \{t>0:X_t >c \}, \ \ \ \tau_c^-=\inf \{t>0: X_t<c \}.$$ For $a,b\in \mathbb{R}$, $b<a$ and $x\in [b,a]$, the previous functions allow to express the Laplace transform of $T=\tau_a^+ \wedge \tau_b^-$, which is the first time the process leaves the interval $[b,a]$, in a closed form, namely
\begin{equation}\label{qscale1}
\mathbb{E}_x \left[e^{-qT},T=\tau_a^+ \right]=\frac{W^{(q)}(x-b)}{W^{(q)}(a-b)}
\end{equation}
and
\begin{equation}\label{qscale2}
\mathbb{E}_x \left[e^{-qT},T=\tau_b^- \right]=Z^{(q)}(x-b)-Z^{(q)}(a-b)\frac{W^{(q)}(x-b)}{W^{(q)}(a-b)}.
\end{equation}
In particular, when $q=0$, these expressions give the probabilities that $X$ exits the interval $[b,a]$ by above or below, respectively.  

Besides, scale functions can be described and analyzed using excursion theory. It is well known (see for instance \cite[Ch. VIII.2]{kyprianou2014fluctuations}) that the probability of leaving the interval by above can be expressed as
\begin{equation}\label{equation1}
\mathbb{P}_x \left(\tau_a^+ < \tau_b^- \right)=\frac{W(x-b)}{W(a-b)}=\exp\left\{-\int^{a-b}_{x-b} ds \overline{N}\left( H>s\right) \right\}=\exp\left\{-\int^{a}_x ds \overline{N}\left( H>s-b\right) \right\},
\end{equation}
where $\overline{N}$ is the so called \textit{excursion measure} of the reflected process $S-X$. For a generic excursion $\mathbf{e}$, $\zeta=\zeta(\mathbf{e})$ denotes its lifetime or duration and $H=H(\mathbf{e})$ is its height. Further details about excursions will be described in the forthcoming Section \ref{Preliminaries}.

Now, consider the more general functional of the path of $X,$ stopped at time $T,$ given by $\int_0^T dt f(X_t)$, for a non-negative and locally bounded function $f$. In \cite{li2018fluctuations}, Li and Palmowski found a nice expression for its Laplace transform but in terms of what they called \textit{generalized} scale functions, $W^{(f)}$ and $Z^{(f)}$. Their Corollary 2.2 states that
$$\mathbb{E}_x\left[\exp\left\{-\int^{T}_0 dtf(X_t)\right\} , T=\tau^{+}_{a}\right]=\frac{W^{(f)} (x,b)}{W^{(f)}(a,b)}$$ and $$\mathbb{E}_x \left[  \exp \left\{ - \int_0^{T} dt f(X_t) \right\}, T=\tau_{b}^-  \right] =Z^{(f)}(x,b)-\frac{W^{(f)}(x,b)}{W^{(f)}(a,b)}Z^{(f)}(a,b).$$ In this setting, the generalized scale functions are no longer characterized as in the classical case. They are determined by being the only locally bounded solutions to the renewal like equations $$W^{(f)}(u,v)=W(u-v) + \int_v^u dz W(u-z)f(z)W^{(f)}(z,v),$$ and $$Z^{(f)}(u,v)=1+\int_{v}^u dz W(u-z) f(z)Z^{(f)}(z,v),$$ where $\int_v^u =0$ if $v>u$. Notice that one can recover the Laplace transform of $T$ by considering $f\equiv q$.

Here, we will continue the work in \cite{li2018fluctuations} in the sense of describing generalized scale functions using excursion theory. This will allow us to consider functionals depending on both $X$ and $S$, namely,  $$\int_0^T dt f(S_t, X_t),$$ where $f:\mathbb{R}^2 \to [0,\infty)$ is measurable and bounded. Also, this will lead us to describe the asymptotic behavior of generalized scale functions, to extend the above quoted identities to these functionals and to establish a new results. In particular, we will derive information on the law of this functional conditionally on the supremum attained prior to the first exit from the interval $[b,a]$ and information on the last position of the process before this time.   

\section{Main results}

For a function $f:\mathbb{R}^2 \to \mathbb{R}_+$ measurable and bounded, we will call {\it generalized scale function} $W_f:\mathbb{R}^2 \to \mathbb{R}$ to the function defined by $W_f(x,b)=0,$ if $x\leq b,$ and, for $x>b,$
\begin{align} \label{DefWfN}
W_f(x,b):=& W(x-b)\exp\left\{ \int_0^{x-b} ds \overline{N} \left(1-\exp \left\{-\int_0^\zeta du f(s+b,s+b-\mathbf{e}(u)) \right\}, H<s \right)   \right\} \\
= & W(x-b) \exp\left\{ \int_b^{x} ds \overline{N} \left(1-\exp \left\{-\int_0^\zeta duf(s,s-\mathbf{e}(u)) \right\}, H<s-b \right) \right\}. \nonumber
\end{align} Where $W$ denotes the classical $0$-scale function associated to $X.$

Considering additional measurable and bounded functions $g:\mathbb{R} \to \mathbb{R}$ and $h:\mathbb{R}^2 \to \mathbb{R}$, we define the {\it generalized $Z$-scale function}, $Z_{f,g,h}:\mathbb{R}^2 \to \mathbb{R}$ as $Z_{f,g,h}(x,b)=1,$ for $x\leq b,$ while for $x>b,$ we take
\begin{equation} \label{DefZfg}
\begin{split}
Z_{f,g,h}(x,b) &:= W_f(x,b) \left( 1 + \int_{x-b}^{\infty} dz g(z+b) \frac{\kappa_b(z+b;f,h)}{W_f(z+b,b)} \right)\\
&= W_f(x,b) \left( 1 + \int_{x}^{\infty} dz g(z) \frac{\kappa_b(z;f,h)}{W_f(z,b)} \right),
\end{split}
\end{equation}
where for $z>b$, $$\kappa_b(z;f,h):=\overline{N} \left( h\left(z-\mathbf{e}\left(\tilde{\tau}_{z-b}^+-\right),z-\mathbf{e}\left(\tilde{\tau}_{z-b}^+ \right) \right)\exp \left\{-\int_0^{\tilde{\tau}_{z-b}^+} du f(z,z-\mathbf{e}(u)) \right\},H>z-b \right)$$ and $\tilde{\tau}^+_c$ the first time a generic excursion $\mathbf{e}$ surpasses level $c$. Notice that the above defined functions are defined directly in terms of scale functions and excursion measure, not recursively. 

We can now state our first main theorem.

\begin{theorem} \label{Th1}
Let $X$ be a spectrally negative L\'evy process with unbounded variation, Laplace exponent $\Psi$ and $S$ its current supremum. For any $a,b \in \mathbb{R}$ such that $b<a$, $x\in (b,a)$, $f,h:\mathbb{R}^2 \to [0,\infty)$ and $g:\mathbb{R} \to \mathbb{R}$ measurable and bounded, if $T=\tau_a^+ \wedge \tau_b^-$ then

\begin{equation} \label{exitWf}
\mathbb{E}_x\left(\exp\left\{-\int^{T}_0 dtf(S_t, X_t)\right\} , T=\tau^{+}_{a}\right)=\frac{W_f (x,b)}{W_f(a,b)},
\end{equation}

and

\begin{equation} \label{exitZf}
\begin{split}
&\mathbb{E}_x \left[ g\left(S_{\tau_{b}^-} \right)h\left(X_{\tau_b^--},X_{\tau_b^-} \right) \exp \left\{ - \int_0^{T} dtf(S_t,X_t)  \right\}, T=\tau_{b}^- \right] \\
&=Z_{f,g,h}(x,b)-\frac{W_f(x,b)}{W_f(a,b)}Z_{f,g,h}(a,b).\\
\end{split}
\end{equation}
\end{theorem}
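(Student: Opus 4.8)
The plan is to decompose the path of $X$ started at $x$ up to time $T$ into the successive excursions of the reflected process $S-X$ away from $0$, using Itô's excursion theory. Because $X$ has unbounded variation, the reflected process $S-X$ is a strong Markov process for which $0$ is regular and instantaneous, so there is a local time $L$ at $0$ (which may be normalized so that it coincides with $S$, the running supremum) and an associated excursion measure $\overline N$. On the event $\{T=\tau_a^+\}$ the process reaches level $a$ before going below $b$; equivalently, none of the excursions of $S-X$ taken while $S$ ranges over $(x,a)$ — i.e.\ while $X$ is near its supremum $s\in(x,a)$ — is tall enough to reach down to level $b$, that is each such excursion $\mathbf e$ has height $H(\mathbf e)<s-b$. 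The functional $\int_0^T f(S_t,X_t)\,\ud t$ splits along this decomposition: the contribution of the piece of path corresponding to an excursion started when $S_t=s$ is $\int_0^{\zeta(\mathbf e)} f(s, s-\mathbf e(u))\,\ud u$, while the Lebesgue-null set of times at which $X_t=S_t$ contributes nothing. Applying the exponential formula (master formula) for Poisson point processes to the excursion point process, the conditional expectation given that $X$ climbs from $x$ to $a$ without an excursion of height exceeding the current distance to $b$ is

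\begin{equation*}
\exp\left\{-\int_x^a \ud s\,\overline N\left(1-\exp\left\{-\int_0^\zeta \ud u\, f(s,s-\mathbf e(u))\right\};\,H<s-b\right)\right\},
\end{equation*}

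and the (sub)probability that no excursion in that range is tall enough to hit $b$ is $\exp\{-\int_x^a \ud s\,\overline N(H\ge s-b)\}=W(x-b)/W(a-b)$ by \eqref{equation1}. Multiplying these two factors and recognising the definition \eqref{DefWfN} of $W_f$, one gets $\mathbb E_x[\exp\{-\int_0^T f(S_t,X_t)\,\ud t\},T=\tau_a^+]=W_f(x,b)/W_f(a,b)$, which is \eqref{exitWf}.

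For \eqref{exitZf} I would argue similarly but now track what happens on the event $\{T=\tau_b^-\}$. Here the process again climbs through excursions while $S_t=s$ for $s$ ranging in $[x,\infty)$ until the first excursion, say started at running maximum $z$, that is tall enough to reach level $b$; on that excursion $\tilde\tau^+_{z-b}$ marks the first time $\mathbf e$ reaches depth $z-b$ (i.e.\ $X$ hits $b$), so $S_{\tau_b^-}=z$, $X_{\tau_b^--}=z-\mathbf e(\tilde\tau^+_{z-b}-)$ and $X_{\tau_b^-}=z-\mathbf e(\tilde\tau^+_{z-b})$, and the accumulated functional picks up $\int_0^{\tilde\tau^+_{z-b}} \ud u\, f(z,z-\mathbf e(u))$ from this last excursion plus the usual contributions from all earlier (short) excursions. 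Integrating over the location $z$ at which this first "successful" excursion occurs, weighting by the density of the excursion local time and the master-formula survival factor $\exp\{-\int_x^z \ud s\,\overline N(\cdots;H<s-b)\}$ built from $W_f$, and using the definition of $\kappa_b(z;f,h)$ to encode the expectation over that last excursion, yields

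\begin{equation*}
\mathbb E_x\!\left[g(S_{\tau_b^-})h(X_{\tau_b^--},X_{\tau_b^-})e^{-\int_0^T f}\,;\,T=\tau_b^-\right]=W_f(x,b)\int_x^\infty \ud z\, g(z)\,\frac{\kappa_b(z;f,h)}{W_f(z,b)}.
\end{equation*}

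Finally, to put this in the stated form, I would either first establish the identity $\mathbb E_x[g(S_{\tau_b^-})h e^{-\int_0^T f};T=\tau_b^-,\tau_b^-<\tau_\infty^+]$ with $a=\infty$ (the displayed integral above) and then restrict to finite $a$ by a standard strong-Markov / path-decomposition argument at $\tau_a^+$: on $\{T=\tau_a^+\}$ the process afterwards behaves as if started at $a$, so the quantity at $x$ with barrier $a$ equals the quantity at $x$ with barrier $\infty$ minus $\mathbb E_x[e^{-\int_0^{\tau_a^+}f};\tau_a^+<\tau_b^-]$ times the quantity at $a$ with barrier $\infty$; invoking \eqref{exitWf} and the definition \eqref{DefZfg} of $Z_{f,g,h}$ as $W_f(x,b)(1+\int_x^\infty g(z)\kappa_b(z;f,h)/W_f(z,b)\,\ud z)$ then gives exactly $Z_{f,g,h}(x,b)-\frac{W_f(x,b)}{W_f(a,b)}Z_{f,g,h}(a,b)$. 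The main obstacle I anticipate is the rigorous bookkeeping in the excursion decomposition: justifying that the running supremum is a legitimate local time and that the finitely many "partial" excursions straddling times $x$ and the exit time are handled correctly, controlling the (possibly infinite) excursion measure so that the exponential formula applies — this requires the boundedness of $f,h$ and $g$, and the assumption of unbounded variation — and checking that the set $\{t\le T:X_t=S_t\}$ indeed has zero Lebesgue measure so it does not contribute to $\int_0^T f(S_t,X_t)\,\ud t$.
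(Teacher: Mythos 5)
Your proposal is correct and follows essentially the same route as the paper: the excursion decomposition of $S-X$ combined with the exponential formula yields \eqref{exitWf}, and the compensation formula applied to the first excursion tall enough to reach below $b$ (with $\kappa_b$ encoding the data of that straddling excursion) yields \eqref{exitZf}; this is exactly the content of the paper's Lemma~\ref{Lm1}. The only cosmetic difference is that you recover the finite-$a$ version of \eqref{exitZf} from the $a=\infty$ identity via the strong Markov property at $\tau_a^+$, whereas the paper keeps the restriction $z<a$ inside the compensation formula and then identifies $\int_x^a \ud z\, g(z)\,W_f(x,b)\kappa_b(z;f,h)/W_f(z,b)$ with $Z_{f,g,h}(x,b)-\frac{W_f(x,b)}{W_f(a,b)}Z_{f,g,h}(a,b)$ by the same telescoping algebra.
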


This result preserves the structure for the expressions of the Laplace transform and extends that of Li and Palmowski, which can be recovered by considering $f$ depending only on the second entry and $g\equiv h \equiv 1$. The functions $W_f$ and $Z_{f,g,h}$ are expressed in terms of $\overline{N}$, as an alternative to the issue of finding solutions to integral equations. There are various descriptions of the excursion measure $\overline{N}$ that can be useful here, and we refer to \cite[Ch. 7]{bertoin1996levy} and \cite[Ch. 8]{kyprianou2014fluctuations}.   

The definition of $W_f$ implies the following corollary.
\begin{corollary} \label{Cor1}
For every $b\in \mathbb{R}$, the asymptotic behavior of the scale function $W_f(\cdot,b)$ is determined by
\begin{equation}
\exp \left\{-\int_b^x ds  \overline{N} \left(1-\exp \left\{-\int_0^\zeta du f(s,s-\mathbf{e}(u)) \right\},H<s-b \right)  \right\} \frac{W_f(x,b)}{W(x-b)} \to 1,
\end{equation}
as $x\to \infty$.
\end{corollary}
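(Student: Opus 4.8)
The plan is short, for the Corollary is in essence a reformulation of the definition \eqref{DefWfN} rather than a statement needing an argument. First I would recall from the Introduction that, under the standing unbounded-variation assumption, the classical scale function $W$ is continuous and strictly increasing on $[0,\infty)$, and strictly positive on $(0,\infty)$ (as is immediate from its positive Laplace transform); thus $W(x-b)>0$ for every $x>b$, and the quotient $W_f(x,b)/W(x-b)$ is well defined on $(b,\infty)$. Dividing the second line of \eqref{DefWfN} by $W(x-b)$ then gives, for every $x>b$,
\begin{equation*}
\frac{W_f(x,b)}{W(x-b)}=\exp\left\{\int_b^x ds\,\overline{N}\left(1-\exp\left\{-\int_0^\zeta du\,f(s,s-\mathbf{e}(u))\right\},\,H<s-b\right)\right\},
\end{equation*}
so the product appearing in the Corollary equals $1$ identically on $(b,\infty)$ and, a fortiori, converges to $1$ as $x\to\infty$.

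The only point I would still verify, so that the identity above is not a vacuous $\infty\cdot 0$ statement, is that the integral in the exponent is finite for every finite $x$. This follows from the boundedness of $f$: if $f\le M$, then $1-\exp\{-\int_0^\zeta f(s,s-\mathbf{e}(u))\,du\}\le 1-e^{-M\zeta}\le 1\wedge(M\zeta)$, and $s\mapsto\overline{N}\left(1\wedge(M\zeta),\,H<s-b\right)$ is finite and locally bounded on $(b,\infty)$ --- the very local integrability that is already at work in the classical case $f\equiv q$, where this same exponential factor turns $W$ into $W^{(q)}$, cf. \eqref{equation1} and the discussion around it.

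I do not anticipate any genuine obstacle; the content is conceptual, namely that $W_f(\cdot,b)$ differs from the map $x\mapsto W(x-b)$ only through the explicit excursion-theoretic multiplicative correction $\exp\{\int_b^\cdot ds\,\overline{N}(\cdots)\}$, and so inherits the asymptotics of $W$ up to that factor. Moreover, since for $f$ depending only on its second argument and $g\equiv h\equiv 1$ the function $W_f$ agrees with the generalized scale function $W^{(f)}$ of Li and Palmowski (as recalled in the Introduction and consistent with Theorem \ref{Th1}), the Corollary simultaneously pins down the large-$x$ behavior of that a priori only implicitly defined object.
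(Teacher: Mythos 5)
Your proposal is correct and matches the paper's treatment: the paper offers no separate proof, stating only that ``the definition of $W_f$ implies the corollary,'' which is exactly your observation that dividing the second line of \eqref{DefWfN} by $W(x-b)$ makes the displayed product identically equal to $1$ on $(b,\infty)$. Your finiteness check of the exponent via $\overline{N}\bigl(1-e^{-K\zeta}\bigr)=\Phi(K)<\infty$ is the same bound the paper records in Lemma~\ref{Lm2}.
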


\textbf{Remarks. 1)} This approach provides an expression for functionals that can depend on both $X$ and its supremum. For instance, taking $f(u,v)$ depending only on $u-v$ gives information about the reflected process $S-X$. Additionally, if $f\equiv 0$ and $h\equiv 1$, equation (\ref{exitZf}) provides an expression for the last position of the supremum prior to the exit by below, which has not been found on the literature before, namely $$\mathbb{E}_x \left[g\left(S_{\tau_b^-} \right),T=\tau_b^- \right]=\int_x^a dz g(z) \frac{W(x-b)}{W(z-b)} \overline{N} \left(H>z-b \right).$$ Similarly, considering $f\equiv 0$ we obtain information on the points visited by $X$ prior and after the exit by $b$, and the supremum:\
\begin{align*}
&\mathbb{E}_x \left[g\left(S_{\tau_b^-} \right)h\left(X_{\tau_b^--},X_{\tau_b^-} \right),T=\tau_b^- \right]\\
&=\int_x^a dz \overline{N}(H>z-b)g(z)\frac{W(x-b)}{W(z-b)} \frac{\overline{N} \left(h\left(z-\mathbf{e}\left(\tilde{\tau}_{z-b}^+- \right),z-\mathbf{e}\left(\tilde{\tau}_{z-b}^+ \right) \right),H>z-b \right)}{\overline{N}(H>z-b)}.
\end{align*}

\textbf{2)} Observe that one can change the definition of $W_f(\cdot,b)$ by multiplying by a constant $c>0$ or consider a constant $d>0$ instead of 1 in the definition of $Z_{f,g,h}$ and the result of Theorem \ref{Th1} remains true.

\textbf{3)} In the forthcoming Theorem \ref{Th3} we prove that in case $g,h\equiv 1$ and $f$ depends on $X$ and not in the supremum $S$, the scale functions $W_f$ and $Z_f$ coincide with those on \cite{li2018fluctuations} up to constants, as in the previous remark. To provide a particular example, take $f\equiv 0$ and $b=0$. We have by definition that $W_0(x,0)=W(x)$ and for $Z_0(x,0)$,
\begin{equation*}
Z_0(x,0)= W_0(x,0) \left(1+\int_x^\infty dz \frac{\overline{N}(H>z)}{W_0(z,0)} \right) = W(x) \left(1+\int_x^\infty dz \frac{\overline{N}(H>z)}{W(z)} \right).
\end{equation*}
From Lemma 8.2 in \cite{kyprianou2014fluctuations}, since $X$ is of unbounded variation, $W$ is differentiable and $W'(z)/W(z)=\overline{N}(H>z)$. Therefore,
\begin{eqnarray*}
Z_0(x,0)&=& W(x) \left(1-\int_x^\infty dz \frac{d}{dz} \left(\frac1{W(z)} \right) \right) \\
&=& W(x) \left(1-\frac1{W(\infty)} + \frac1{W(x)} \right) \\
&=& 1+\left(1-\frac1{W(\infty)} \right)W(x).
\end{eqnarray*}
Since $W$ is a non-decreasing function, either $1/W(\infty)$ is zero or a positive constant. In this case, changing the 1 in the definition for $d=1/W(\infty)$, we have that $Z_0(x,0)=1=Z(x)$, that is, the generalized scale function $Z$ coincides with the classical one up to that constant $d$. Using a change of measure, with similar arguments one can verify that $q$-scale functions satisfy the definitions in (\ref{DefWfN}) and (\ref{DefZfg}). 

\vspace{0.3cm}

Furthermore, in the forthcomming Proposition~\ref{Pr1} it is proved that we can completely express $W_f(\cdot,b)$ in terms of $\overline{N}$. See Section \ref{proofs}.

There is also a relation between the equations on Theorem \ref{Th1} and the theory of local times. Recall that $X$ has an associated process $(L_t^x,t\geq 0, x\in \mathbb{R})$ where $L_t^x$ is the \textit{local time of $X$ at $x$ up to time $t$} and can be interpreted as the amount of time the process stays in state $x$ during $[0,t]$. Local times satisfy the \textit{occupation formula}:
\begin{equation} \label{OcFor}
\int_0^\tau dt f(X_t) \stackrel{\text{a.s.}}{=} \int_\mathbb{R} dy f(y)L_\tau^y,
\end{equation}
for any $f\geq 0$ measurable and bounded and stopping times $\tau$. Therefore, we have the following corollary.

\begin{corollary}
For $x\in (b,a)$, the Laplace transform of $\int_b^a dy f(y)L_T^y$ under $\mathbb{P}_x$ is given by 
\begin{align}
&\mathbb{E}_x \left[\exp\left\{-\int_b^a dy f(y)L_T^y \right\} \right]=Z_f(x,b)-\frac{W_f(x,b)}{W_f(a,b)} \left(Z_f(a,b)-1 \right) \\ \nonumber
%& \hspace{2.5cm}= \frac{W_f(x,b)}{W_f(a,b)} + \int_x^a dz \frac{W_f(x,b)}{W_f(z,b)} \overline{N} \left(\exp\left\{-\int_0^{\tau_{z-b}^+} du f(z-\mathbf{e}(u)) \right\},H>z-b \right)\\
& \hspace{2.5cm}= \frac{W_f(x,b)}{W_f(a,b)} + \int_x^a dz \frac{W_f(x,b)}{W_f(z,b)} \overline{N} \left(\exp\left\{-\int_b^{z} dv L^{z-v}_{\tau^{+}_{z-b}}(\mathbf{e})f(v) \right\},H>z-b \right).
\end{align}
\end{corollary}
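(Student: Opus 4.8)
The plan is to reduce the claim to Theorem~\ref{Th1} via the occupation formula \eqref{OcFor}, and then to unwind the definition of $Z_{f,1,1}$.

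First I would rewrite the exponent. Since $X$ has no positive jumps, $X_t\le a$ for every $t<\tau_a^+$ and $X_t\ge b$ for every $t<\tau_b^-$, so $X_t\in[b,a]$ for every $t\in[0,T)$; the single instant $t=T$ is Lebesgue-negligible. Hence, applying \eqref{OcFor} to the bounded nonnegative function $y\mapsto f(y)\mathbf 1_{[b,a]}(y)$ and the stopping time $T$,
\[
\int_b^a dy\,f(y)L_T^y \;\overset{\text{a.s.}}{=}\; \int_0^T dt\,f(X_t)\mathbf 1_{[b,a]}(X_t) \;=\; \int_0^T dt\,f(X_t).
\]
Now view $f$ as the bounded measurable function $(u,v)\mapsto f(v)$ on $\mathbb R^2$, take $g\equiv h\equiv 1$, and write $W_f$, $Z_f:=Z_{f,1,1}$ for the associated generalized scale functions. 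Splitting the expectation over $\{T=\tau_a^+\}$ and $\{T=\tau_b^-\}$ and invoking \eqref{exitWf} and \eqref{exitZf} gives
\[
\mathbb E_x\!\left[\exp\Bigl\{-\int_b^a dy\,f(y)L_T^y\Bigr\}\right]
=\frac{W_f(x,b)}{W_f(a,b)} + Z_f(x,b) - \frac{W_f(x,b)}{W_f(a,b)}Z_f(a,b),
\]
which, after collecting the $W_f/W_f$ term, is exactly the first displayed equality in the statement.

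For the second equality I would insert the definition \eqref{DefZfg}: for $y>b$, $Z_f(y,b)=W_f(y,b)\bigl(1+\int_y^\infty dz\,\kappa_b(z;f,1)/W_f(z,b)\bigr)$, with $\kappa_b(z;f,1)=\overline N\bigl(\exp\{-\int_0^{\tilde\tau^+_{z-b}}du\,f(z-\mathbf e(u))\},\,H>z-b\bigr)$. Since $b<x<a$ we have $W_f(x,b),W_f(a,b)\in(0,\infty)$, and plugging this into $Z_f(x,b)-\tfrac{W_f(x,b)}{W_f(a,b)}\bigl(Z_f(a,b)-1\bigr)$, the terms free of the integral cancel and the two tail integrals combine into $\int_x^a$, producing
\[
\frac{W_f(x,b)}{W_f(a,b)} + \int_x^a dz\,\frac{W_f(x,b)}{W_f(z,b)}\,\kappa_b(z;f,1).
\]
It then remains to identify $\kappa_b(z;f,1)$ with the stated integrand, which I would do by applying an occupation formula to the generic excursion path $\mathbf e$ run up to $\tilde\tau^+_{z-b}$: since $0\le\mathbf e(u)<z-b$ for $u<\tilde\tau^+_{z-b}$,
\[
\int_0^{\tilde\tau^+_{z-b}}du\,f(z-\mathbf e(u)) = \int_0^{z-b}d\ell\,f(z-\ell)\,L^\ell_{\tilde\tau^+_{z-b}}(\mathbf e) = \int_b^z dv\,f(v)\,L^{z-v}_{\tilde\tau^+_{z-b}}(\mathbf e),
\]
the last step being the substitution $v=z-\ell$; here $\tilde\tau^+_{z-b}$ is the first passage of the excursion above $z-b$ (denoted $\tau^+_{z-b}$ in the statement). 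Inserting this inside $\overline N(\,\cdot\,,H>z-b)$ yields the claimed integrand.

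The delicate point is the occupation formula for the generic excursion in the last display: it requires that $\overline N$-a.e.\ excursion $\mathbf e$ admit a (jointly measurable) family of local times $L^\ell_\cdot(\mathbf e)$ obeying an occupation identity. This is legitimate because $X$ admits local times (as recalled before \eqref{OcFor}) and, up to reflection and a constant shift, the generic excursion of $S-X$ is a piece of the path of $X$, so \eqref{OcFor} transfers to it; making this precise calls for one of the path descriptions of $\overline N$ (see \cite{kyprianou2014fluctuations}). A routine check, needed to justify the rearrangement of the (a priori improper) integrals in the second step, is that everything is finite: with $f\ge 0$ one has $\kappa_b(z;f,1)\le\overline N(H>z-b)=W'(z-b)/W(z-b)$ and $W_f(z,b)\ge W(z-b)$, so $\kappa_b(z;f,1)/W_f(z,b)\le -\tfrac{d}{dz}\bigl(1/W(z-b)\bigr)$ is integrable on $[x,\infty)$, and the left-hand side is bounded by $1$.
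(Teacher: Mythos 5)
Your proposal is correct and follows exactly the route the paper takes: it obtains the first identity by adding \eqref{exitWf} and \eqref{exitZf} with $g\equiv h\equiv 1$ after converting $\int_b^a dy\,f(y)L_T^y$ into $\int_0^T dt\,f(X_t)$ via the occupation formula, and the second by unwinding \eqref{DefZfg} (equivalently, equation \eqref{Eqgzf}). You in fact supply more detail than the paper does, notably the finiteness check and the occupation-formula identification of $\kappa_b(z;f,1)$ with the excursion local-time functional, which the paper leaves implicit.
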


This result is obtained just by adding (\ref{exitWf}) and (\ref{exitZf}) and it can help in characterizing the finite dimensional distributions of the local time process $\left(L_T^y, y\in [b,a] \right)$, but it is a problem that we do not explore here. Regarding this matter, in \cite{li2020local} Li and Zhou provide an expression for the joint Laplace transform of the local times at different levels $x_1,\dots,x_n \in (b,a)$. Thus this is an improvement of their result.

In the next theorem we use the arguments in the proof of identity (\ref{exitZf}) to describe the Laplace transform of $\int_0^{T} dt f(S_t,X_t)$ conditionally on the height of the supremum at the first exit time. 
\begin{theorem} \label{Th2}
The conditional expectation of the Laplace transform of the functional $\int_0^T dt f(S_t,X_t)$ given the supremum reached up to time $T$ is a.s. given by

\begin{align} \label{eqTh2}
&\mathbb{E}_x \left[ \exp\left\{ -\int_0^{T} dt f(S_t,X_t) \right\} \bigg| S_{T}=z \right]  \nonumber \\
& \ \ \ \ \ \ \  = \frac{W(z-b)}{W(x-b)} \frac{W_f(x,b)}{W_f(z,b)} \times \begin{cases}
\frac{\overline{N} \left(\exp \left\{-\int_0^{\tilde{\tau}^+_{z-b}}du f(z,z-\mathbf{e}(u)) \right\} , H>z-b\right)}{\overline{N}(H>z-b)}, & \ \ \text{if } x< z<a, \\
1, & \ \ \text{if }z=a.
\end{cases}
\end{align}
\end{theorem}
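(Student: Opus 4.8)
The plan is to deduce Theorem~\ref{Th2} from Theorem~\ref{Th1}: read off, for an arbitrary test function $g$, the joint law of the pair $\bigl(S_{T},\exp\{-\int_{0}^{T}dt\,f(S_{t},X_{t})\}\bigr)$ from \eqref{exitWf}--\eqref{exitZf}, and then divide by the marginal law of $S_{T}$. Fix $b<x<a$ and write $I_{f}:=\exp\{-\int_{0}^{T}dt\,f(S_{t},X_{t})\}$. On $\{T=\tau_{a}^{+}\}$ one has $S_{T}=a$, so \eqref{exitWf} gives $\mathbb{E}_{x}[g(S_{T})I_{f};\,T=\tau_{a}^{+}]=g(a)W_{f}(x,b)/W_{f}(a,b)$, while \eqref{exitZf} with $h\equiv 1$ gives $\mathbb{E}_{x}[g(S_{T})I_{f};\,T=\tau_{b}^{-}]=Z_{f,g,1}(x,b)-\tfrac{W_{f}(x,b)}{W_{f}(a,b)}Z_{f,g,1}(a,b)$. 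Substituting definition \eqref{DefZfg} into the latter collapses the two infinite integrals into one, so that altogether
\[
\mathbb{E}_{x}\bigl[g(S_{T})\,I_{f}\bigr]=g(a)\,\frac{W_{f}(x,b)}{W_{f}(a,b)}+W_{f}(x,b)\int_{x}^{a}dz\,g(z)\,\frac{\kappa_{b}(z;f,1)}{W_{f}(z,b)}.
\]
Since $g$ is arbitrary, this identifies the sub-probability measure $\mu_{f}(dz):=\mathbb{E}_{x}[I_{f};\,S_{T}\in dz]$ as the density $W_{f}(x,b)\,\kappa_{b}(z;f,1)/W_{f}(z,b)$ on $(x,a)$ plus an atom of mass $W_{f}(x,b)/W_{f}(a,b)$ at $z=a$. (Testing the identity with $g=\mathbf{1}_{\{z\}}$ for $x\le z<a$ shows $S_{T}$ has no atoms on $[x,a)$, and since $S_{T}\ge X_{0}=x$ always and $S_{T}=a$ exactly on $\{T=\tau_{a}^{+}\}$, there is no hidden mass elsewhere.)

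Next, specialize the display to $f\equiv 0$: from \eqref{DefWfN} one gets $W_{0}(\cdot,b)=W(\cdot-b)$, and from the definition of $\kappa_{b}$ one gets $\kappa_{b}(z;0,1)=\overline{N}(H>z-b)$, so
\[
\mathbb{P}_{x}(S_{T}\in dz)=W(x-b)\,\frac{\overline{N}(H>z-b)}{W(z-b)}\,dz\ \ \text{for}\ x<z<a,\qquad \mathbb{P}_{x}(S_{T}=a)=\frac{W(x-b)}{W(a-b)}.
\]
Because $I_{f}\le 1$, we have $\mu_{f}\ll\mathbb{P}_{x}\circ S_{T}^{-1}$, and a version of the conditional expectation $\mathbb{E}_{x}[I_{f}\mid S_{T}=z]$ is the Radon--Nikodym derivative $d\mu_{f}/d(\mathbb{P}_{x}\circ S_{T}^{-1})(z)$. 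Taking the quotient of the two densities above at $z\in(x,a)$ and, separately, the quotient of the two atoms at $z=a$, and recalling that $\kappa_{b}(z;f,1)=\overline{N}\bigl(\exp\{-\int_{0}^{\tilde{\tau}_{z-b}^{+}}du\,f(z,z-\mathbf{e}(u))\};\,H>z-b\bigr)$, produces exactly the two cases of \eqref{eqTh2}.

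The only genuine work lies in the first paragraph, namely pinning down $\mu_{f}$; once that is in hand the passage to the conditional expectation is the routine Radon--Nikodym step, and the boundary case $z=a$ is immediate from \eqref{exitWf} together with $\mathbb{P}_{x}(T=\tau_{a}^{+})=W(x-b)/W(a-b)$. If one prefers not to invoke \eqref{exitZf} but to re-run its proof, the delicate point is the excursion decomposition $\int_{0}^{T}dt\,f(S_{t},X_{t})=\sum_{\text{excursions}}\int f(s,s-\mathbf{e}(u))\,du$ (legitimate because $\{t:X_{t}=S_{t}\}$ is Lebesgue-null under the unbounded-variation assumption) together with the fact that, conditionally on the straddling excursion that first reaches relative height $z-b$ sitting at local-time level $z$, the preceding excursions form an independent Poisson point process whose weighted contribution is exactly $W_{f}(x,b)/W_{f}(z,b)$ by \eqref{exitWf} applied on $[b,z]$, while the straddling excursion supplies the factor $\kappa_{b}(z;f,1)$.
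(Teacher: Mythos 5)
Your proposal is correct and takes essentially the same route as the paper: both identify the joint law $\mathbb{E}_x[\exp\{-\int_0^T f(S_t,X_t)\,dt\};S_T\in dz]$ from Theorem~\ref{Th1} (the paper via its equation \eqref{Eqgzf}, you via \eqref{exitZf} and the definition of $Z_{f,g,h}$, which is the same computation), then specialize to $f\equiv 0$ to obtain the law of $S_T$ — a density on $(x,a)$ plus an atom at $a$ — and conclude by dividing. The paper phrases the final step as uniqueness of the disintegration kernel $K$ against arbitrary test functions $g$, while you phrase it as a Radon--Nikodym derivative; these are the same argument.
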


\textbf{Remark 4)} Using Theorem \ref{Th1} and equation (\ref{equation1}), the first two quotients in the right hand side of (\ref{eqTh2}) are precisely $$\mathbb{E}_x \left[ \exp\left\{ -\int_0^{\tau_z^+} dt f(S_t,X_t) \right\} \bigg| \tau_z^+ <\tau_b^- \right].$$ Hence, conditional to $S_T=z$, the process first needs to reach $z$ and this term represents the contribution of the functional on this event. And if $z \in (x,a)$, the second term represents the contribution of the last excursion from the supremum at level $z$ that makes the process leave the interval by below. 

%\textbf{Remark.} Consider $f,g$ as above, $b<a$ and $x\in [b,a]$. Let $f_*(u,v)=f(u+b,v+b)$ and $g_*(u)=g(u+b)$. Then,
%
%\begin{eqnarray*}
%\frac1{c_{f_*}} W_{f_*}(x-b,0)&=& \exp \left\{ \int_0^{x-b} ds \left[\overline{N}(H>s) + \overline{N} \left(1-e^{-\int_0^\zeta f_* (s,s-\mathbf{e}(u) du}, H<s \right) \right] \right\} \\
%&=& \exp \left\{ \int_0^{x-b} ds \left[\overline{N}(H>s) + \overline{N} \left(1-e^{-\int_0^\zeta f (s+b,s+b-\mathbf{e}(u) du}, H<s \right) \right] \right\} \\
%&=& \frac1{c_f} W_f(x,b)
%\end{eqnarray*}
%
%and
%
%\begin{eqnarray*}
%Z_{f_*,g_*}(x-b,0)&=& W_{f_*}(x-b,0) \left( d_{f_*,g_*} - \int_{0}^{x-b} dz g_*(z) \frac{\overline{N} \left(\exp \left\{-\int_0^{\tilde{\tau}_{z}^+} f_*(z,z-\mathbf{e}(u)) du \right\},H>z \right)}{W_{f_*}(z,0)} \right) \\
%&=&\frac{c_{f_*}}{c_f}W_{f}(x,b) \left( d_{f_*,g_*} - \frac{c_f}{c_{f_*}}\int_{0}^{x-b} dz g(z+b) \right. \\
%& \ & \times  \left. \frac{\overline{N} \left(\exp \left\{-\int_0^{\tilde{\tau}_{z}^+} f(z+b,z+b-\mathbf{e}(u)) du \right\},H>z \right)}{W_{f}(z+b,b)} \right) \\
%&=& Z_{f,g}(x,b) + \left(\frac{c_{f_*}}{c_f} d_{f_*,g_*}-d_{f,g} \right)W_f(x,b).
%\end{eqnarray*}
%
%Hence, to study $W_f(x,b)$ and $Z_{f,g}(x,b)$ as $x\to \infty$ it is enough to consider $b=0$ and translate the functions accordingly.
Finally, the next theorem relates the scale functions $W_f$ and $Z_f$ defined before to the scale functions $W^{(f)}$ and $Z^{(f)}$ in \cite{li2018fluctuations}.

\begin{theorem} \label{Th3}
If $f(u,v)\equiv f(v)$, i.e. $f$ depends only on the second entry, then the generalized scale functions $W_f(\cdot,b)$, $Z_f(\cdot,b)$ and $W^{(f)}(\cdot,b)$, $Z^{(f)}(\cdot,b)$ differ by positive constants in the sense of Remark 2).
\end{theorem}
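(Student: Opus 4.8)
The plan is to identify $W_f(\cdot,b)$ and $Z_f(\cdot,b):=Z_{f,1,1}(\cdot,b)$ by matching the two closed-form exit identities of Theorem~\ref{Th1} with the corresponding ones of Li and Palmowski. Fix $b<a$, $x\in(b,a)$ and suppose $f(u,v)\equiv f(v)$; since $f$ is bounded and measurable it is in particular locally bounded and nonnegative, so Corollary~2.2 of \cite{li2018fluctuations} applies. Then $f(S_t,X_t)=f(X_t)$ and the factors $g(S_{\tau_b^-})$ and $h(X_{\tau_b^--},X_{\tau_b^-})$ equal $1$, so (\ref{exitWf}) and (\ref{exitZf}) specialize to
\[
\mathbb{E}_x\!\left(\exp\!\left\{-\int_0^T dt\, f(X_t)\right\},\,T=\tau_a^+\right)=\frac{W_f(x,b)}{W_f(a,b)},
\]
\[
\mathbb{E}_x\!\left[\exp\!\left\{-\int_0^T dt\, f(X_t)\right\},\,T=\tau_b^-\right]=Z_f(x,b)-\frac{W_f(x,b)}{W_f(a,b)}\,Z_f(a,b),
\]
while Corollary~2.2 of \cite{li2018fluctuations} gives the same two expectations with $W_f,Z_f$ replaced by $W^{(f)},Z^{(f)}$. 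I would first record that all the denominators above are strictly positive: for $t>0$ one has $W_f(b+t,b)=W(t)\exp\{\cdots\}>0$ since $W(t)>0$ and $f\ge0$ makes the exponent finite and nonnegative, and $W^{(f)}(b+t,b)\ge W(t)>0$ by Picard iteration of the defining renewal equation (using $f,W^{(f)}\ge0$); for $t\le0$ all four functions vanish.

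First I would handle $W_f$. Equating the two expressions for $\mathbb{E}_x(\exp\{-\int_0^T f(X_t)\,dt\},T=\tau_a^+)$ yields $W_f(x,b)\,W^{(f)}(a,b)=W^{(f)}(x,b)\,W_f(a,b)$ for all $b<x<a$; fixing $x$ and letting $a$ range over $(x,\infty)$ shows that $c(b):=W_f(x,b)/W^{(f)}(x,b)$ does not depend on $x\in(b,\infty)$. Hence $W_f(\cdot,b)=c(b)\,W^{(f)}(\cdot,b)$ on $(b,\infty)$ with $c(b)>0$, and the identity extends trivially to $(-\infty,b]$. This is exactly the rescaling $W\mapsto cW$ allowed in Remark~2.

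Next I would handle $Z_f$. Subtracting the two expressions for $\mathbb{E}_x(\exp\{-\int_0^T f(X_t)\,dt\},T=\tau_b^-)$ and using the already established $W_f(x,b)/W_f(a,b)=W^{(f)}(x,b)/W^{(f)}(a,b)$, one obtains
\[
Z_f(x,b)-Z^{(f)}(x,b)=\frac{W_f(x,b)}{W_f(a,b)}\big(Z_f(a,b)-Z^{(f)}(a,b)\big),\qquad b<x<a,
\]
so that $D(\cdot,b):=Z_f(\cdot,b)-Z^{(f)}(\cdot,b)$ satisfies $D(x,b)/W_f(x,b)=D(a,b)/W_f(a,b)$ on $(b,\infty)$; therefore $D(\cdot,b)=\delta(b)\,W_f(\cdot,b)$ for some constant $\delta(b)$, i.e.
\[
Z_f(x,b)=Z^{(f)}(x,b)+\delta(b)\,W_f(x,b).
\]
By the computation carried out in Remark~3, adding a constant multiple of $W_f(\cdot,b)$ to the generalized $Z$-scale function is precisely the effect of replacing the constant $1$ in (\ref{DefZfg}) by another constant; thus $Z^{(f)}(\cdot,b)$ is the Remark~2 modification of $Z_f(\cdot,b)$ with constant $d(b)=1-\delta(b)$, which completes the identification. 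I expect the only genuinely delicate point to be checking that $d(b)$ has the right sign: one lets $x\to\infty$ in the last display, uses that $Z_f(x,b)/W_f(x,b)\to1$ (directly from (\ref{DefZfg}), the tail integral being finite because $\kappa_b(z;f,1)\le\overline{N}(H>z-b)=W'(z-b)/W(z-b)$ while $W_f(z,b)\ge W(z-b)$), and that $Z^{(f)}(x,b)/W_f(x,b)\ge0$; this gives $d(b)\ge0$, with strict positivity outside the recurrent situation, in agreement with the value $d=1/W(\infty)$ identified in Remark~3 in the case $f\equiv0$. Finally, since translation invariance is lost when $f$ is non-constant, the constants $c(b)$ and $d(b)$ depend on $b$ in general. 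As an alternative route one could try to verify directly that $W_f(\cdot,b)$ solves the renewal equation characterizing $W^{(f)}(\cdot,b)$ up to a constant, but this essentially re-proves Theorem~\ref{Th1} and is less transparent.
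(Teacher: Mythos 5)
Your argument is correct and is essentially the paper's own proof: you equate the exit identities of Theorem~\ref{Th1} with those of Corollary~2.2 of \cite{li2018fluctuations} to deduce first that $W_f(\cdot,b)/W^{(f)}(\cdot,b)$ is constant in $x$, and then that $\bigl(Z_f(\cdot,b)-Z^{(f)}(\cdot,b)\bigr)/W_f(\cdot,b)$ is constant in $x$, which is exactly the rescaling allowed by Remark~2. The only additions beyond the paper's argument are the explicit positivity check on the denominators and the discussion of the sign of $d(b)$ via the $x\to\infty$ limit, which are welcome refinements but do not change the route.
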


\section{Preliminaries }\label{Preliminaries}

First, we introduce the space $\mathcal{E}$ of positive right continuous paths with left limits and defined on an interval: $$\mathcal{E}=\left\{\mathbf{e}:[0,\zeta] \to [0,\infty)  \ | \ \zeta \in (0,\infty], \ \mathbf{e}((0,\zeta))\subset (0,\infty) \text{ and }  \mathbf{e} \text{ is c\`adl\`ag}  \right\}.$$ For an element $\mathbf{e}\in \mathcal{E}$, the right endpoint of its interval of definition is called \textbf{duration} or \textbf{lifetime} and it is denoted by $\zeta(\mathbf{e})$. The supremum of $\mathbf{e}$ is called \textbf{height} and is denoted by $H(\mathbf{e})=\sup_{v\in [0,\zeta]} \mathbf{e}(v)$.

For a SNLP $X$, it is well known that one can take $S$ as the local time at the supremum and that its right continuous inverse is a subordinator $(\tau_t^+,t\geq 0)$. For $t>0,$ such that $\tau_t^+ \neq \tau_{t-}^+ := \lim_{s\uparrow t}\tau_s^+$, the supremum is constant and equal to $t$ on the interval $[\tau_{t-}^+,\tau_t^+]$. Therefore, we can define $$\mathbf{e}_t(v):=(S-X)_{\tau_{t-}^+ +v}, \ \ \ 0\leq v\leq \tau_t^+ - \tau_{t-}^+,$$ the excursion of $(S-X)$ at local time $t$. In this case, $\mathbf{e}_t \in \mathcal{E}$ and actually, $\zeta(\mathbf{e}_t)=\tau_t^+ - \tau_{t-}^+$. In case $\tau_t^+=\tau_{t-}^+$, one assigns $\mathbf{e}_t=\delta$, where $\delta \notin \mathcal{E}$ is an auxiliary state. More information on excursion theory can be found on \cite{blumenthal2012excursions} and \cite{fitzsimmons2006excursion}.

A result of excursion theory (see for example \cite[Th. 6.14]{kyprianou2014fluctuations}) states that there exists a measure space $(\mathcal{E},\Sigma,\overline{N})$ such that $\Sigma$ contains the sets of the form $$\left\{\mathbf{e}\in \mathcal{E} : \zeta(\mathbf{e})\in A, H(\mathbf{e})\in B, \mathbf{e}(\zeta)\in C \right\},$$ where $A,B,C$ are Borel sets on $\mathbb{R}$. Furthermore, if $\limsup_{t \to \infty}X_t=\infty$ a.s. (that is, iff $\Psi'(0+)\geq 0$), then $\{(t,\mathbf{e}_t) : t>0, \mathbf{e}_t \neq \delta \}$ is a Poisson point process of intensity $ds\otimes \overline{N} (d\mathbf{e})$. 

On the other hand, if $\limsup_{t\to \infty} X_t < \infty$ a.s. (that is, iff $\Psi'(0+)<0$), then $\{ (t,\mathbf{e}_t): 0<t \leq S_\infty,\mathbf{e}_t \neq \delta \}$ is a Poisson point process of intensity $ds \otimes \overline{N}(d\mathbf{e})$ stopped at the first arrival of an excursion in $\mathcal{E}_\infty := \{\mathbf{e}\in \mathcal{E}:\zeta(\mathbf{e})=\infty \}$, the set of paths with infinite lifetime. For a deeper insight on the excursion measure, we refer to \cite{chaumont2005levy} and \cite{duquesne2003path}.

Hence, one can use the tools from the theory of Poisson point processes, such as the compensation and exponential formulas, to perform computations related to excursions of $X$.

Under the assumption of unbounded variation for $X$, Theorem 6.7 from \cite{kyprianou2014fluctuations} implies that $$\int_0^x dt1_{\{S_t=X_t \}} =0, \ \forall x\geq 0.$$ Therefore, the set of points $\{t\geq 0: S_t=X_t \}$ on which $X$ attains a new supremum has Lebesgue measure 0, a.s. If $f$ is measurable and locally bounded, this allows us to decompose the functional $\int_0^{\tau_a^+} dt f(S_t,X_t)$ into
\begin{eqnarray*}
\int_0^{\tau_a^+} dt f(S_t,X_t)&=& \int_{\{t\leq \tau_a^+ : S_t=X_t \}} dt f(S_t,X_t) + \int_{\{t\leq \tau_a^+ : S_t>X_t \}} dt f(S_t,X_t)  \\
&\stackrel{\text{a.s.}}{=}& \int_{\{t\leq \tau_a^+ : S_t>X_t \}} dt f(S_t,X_t).
\end{eqnarray*}
Since the excursions from the supremum occur on this latter set, we have
\begin{eqnarray*}
\int_0^{\tau_a^+} dt f(S_t,X_t)& \stackrel{\text{a.s.}}{=}& \sum_{0<v \leq a} \int_{\tau^+ _{v-}}^{\tau_v^+} dt f(S_t,X_t) \\
&=& \sum_{0<v \leq a} \int_{0}^{\zeta(\mathbf{e}_v)} du f(v,v-\mathbf{e}_v(u)).
\end{eqnarray*}
We will use this decomposition in the proofs of the results.

Finally, we explore the change in intensity of the excursion process under a change of measure for $X$. It is well known that if $X$ is a SNLP and $q\geq 0$, the process $$M_t(q):= e^{\Phi(q) X_t -qt}, \ \ \ t\geq 0,$$ is a martingale and it has an associated change of measure, called Esscher transform, given by
\begin{equation}
\frac{d\mathbb{P}^{\Phi(q)}}{d\mathbb{P}} \bigg|_{\mathcal{F}_t} =M_t(q), \ \ \ t\geq 0.
\end{equation}
This definition is also valid if one replaces $t$ by a stopping time $\tau$, on the event $\{\tau<\infty \}$.

Under $\mathbb{P}^{\Phi(q)}$, $X$ remains in the class of SNLPs with unbounded variation (see \cite[Ch. 8.1]{kyprianou2014fluctuations}) but its Laplace exponent is $$\Psi_q(\lambda)= \Psi(\lambda + \Phi(q)) -q, \qquad \lambda \geq 0.$$ In particular, since $\Psi$ is strictly convex and $\Phi(0)$ is its largest root, $$\Psi'_q(0+)=\Psi'(\Phi(q)+)\geq \Psi'(\Phi(0)+) \geq 0.$$ Therefore, $X_t \to \infty,$ $\mathbb{P}^{\Phi(q)}-$a.s. if $\Phi(q)>0$ and if $q=0$ and $\Phi(0)=0$, at least one has that $\limsup_{t\to \infty}X_t = \infty$. This change of measure will be useful for the upcoming proofs, along with the following lemma.

\begin{lemma} \label{LmPhiq}
Let $q\geq 0$ and $X$ a SNLP of unbounded variation, with $\overline{N}$ its excursion measure from the supremum. Then, under $\mathbb{P}^{\Phi(q)}$ the process $(t,\mathbf{e}_t)_{t>0}$ is a Poisson point process with intensity $ds\otimes \overline{N}\left(e^{-q\zeta}1_{\{\zeta<\infty\}},d\mathbf{e} \right)$. 
\end{lemma}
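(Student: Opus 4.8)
The plan is to identify the law of the excursion point process $(t,\mathbf{e}_t)_{t>0}$ under $\mathbb{P}^{\Phi(q)}$ by computing its Laplace functional, transferring the computation to $\mathbb{P}$ through the Esscher transform evaluated at the first passage times $\tau_a^+$, and then invoking the exponential formula for the Poisson point process $(t,\mathbf{e}_t)_{t>0}$ under $\mathbb{P}$. Concretely, I would fix $a>0$ and a bounded measurable $\varphi\colon(0,a]\times\mathcal{E}\to[0,\infty)$. The functional $\exp\{-\sum_{0<v\le a}\varphi(v,\mathbf{e}_v)\}$ depends only on the path of $X$ on $[0,\tau_a^+]$, so, restricted by $1_{\{\tau_a^+<\infty\}}$, it is $\mathcal{F}_{\tau_a^+}$-measurable; moreover $X_{\tau_a^+}=a$ because $X$ creeps upwards, and $\tau_a^+<\infty$ holds $\mathbb{P}^{\Phi(q)}$-a.s., since under $\mathbb{P}^{\Phi(q)}$ one has $\limsup_{t\to\infty}X_t=\infty$ (Section~\ref{Preliminaries}). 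Hence the Esscher change of measure at the stopping time $\tau_a^+$, which is valid on $\{\tau_a^+<\infty\}$, yields
\[
\mathbb{E}^{\Phi(q)}\Big[\exp\big\{-\sum_{0<v\le a}\varphi(v,\mathbf{e}_v)\big\}\Big]
=e^{\Phi(q)a}\,\mathbb{E}\Big[\exp\big\{-\sum_{0<v\le a}\varphi(v,\mathbf{e}_v)\big\}\,e^{-q\tau_a^+}\,1_{\{\tau_a^+<\infty\}}\Big].
\]

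Next, since $X$ has unbounded variation the set $\{t:S_t=X_t\}$ is Lebesgue-null, so $\tau_a^+=\sum_{0<v\le a}\zeta(\mathbf{e}_v)$ (this is the decomposition used in Section~\ref{Preliminaries} with $f\equiv 1$), and $\{\tau_a^+<\infty\}$ is exactly the event that no excursion of infinite lifetime occurs before local time $a$. Thus the $\mathbb{P}$-expectation above equals $\mathbb{E}\big[\prod_{0<v\le a}e^{-\varphi(v,\mathbf{e}_v)-q\zeta(\mathbf{e}_v)}1_{\{\zeta(\mathbf{e}_v)<\infty\}}\big]$, and applying the exponential formula to the Poisson point process of intensity $ds\otimes\overline{N}$ — the killing at the first infinite excursion being automatically absorbed by the factors $1_{\{\zeta<\infty\}}$, which vanish on that event — gives $e^{\Phi(q)a}\exp\{-\int_0^a ds\int_{\mathcal{E}}(1-e^{-\varphi(s,\mathbf{e})-q\zeta(\mathbf{e})}1_{\{\zeta<\infty\}})\,\overline{N}(d\mathbf{e})\}$. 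Splitting $1-e^{-\varphi-q\zeta}1_{\{\zeta<\infty\}}=(1-e^{-q\zeta}1_{\{\zeta<\infty\}})+e^{-q\zeta}1_{\{\zeta<\infty\}}(1-e^{-\varphi})$ and integrating the first summand in $s$ separates off the factor $e^{\Phi(q)a-a\,\overline{N}(1-e^{-q\zeta}1_{\{\zeta<\infty\}})}$.

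To finish I must check that this factor is $1$, i.e.\ that $\Phi(q)=\overline{N}(1-e^{-q\zeta}1_{\{\zeta<\infty\}})$. This drops out of the previous computation by taking $\varphi\equiv 0$ (the left-hand side is then $\mathbb{E}^{\Phi(q)}[1]=1$); equivalently it is the classical identity $\mathbb{E}[e^{-q\tau_a^+};\tau_a^+<\infty]=e^{-a\Phi(q)}$ combined with $\tau_a^+=\sum_{0<v\le a}\zeta(\mathbf{e}_v)$ and the exponential formula. Once the prefactor is removed one is left with
\[
\mathbb{E}^{\Phi(q)}\Big[\exp\big\{-\sum_{0<v\le a}\varphi(v,\mathbf{e}_v)\big\}\Big]
=\exp\Big\{-\int_0^a ds\int_{\mathcal{E}}\big(1-e^{-\varphi(s,\mathbf{e})}\big)\,\overline{N}\big(e^{-q\zeta}1_{\{\zeta<\infty\}},d\mathbf{e}\big)\Big\},
\]
which is precisely the Laplace functional of a Poisson point process of intensity $ds\otimes\overline{N}(e^{-q\zeta}1_{\{\zeta<\infty\}},d\mathbf{e})$. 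As $a>0$ and $\varphi$ were arbitrary and the Laplace functional characterizes the law of a point process, the lemma follows; note that the new intensity puts no mass on excursions of infinite lifetime, consistently with $X$ not remaining bounded above under $\mathbb{P}^{\Phi(q)}$.

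The step I expect to require the most care is the bookkeeping around the stopping time $\tau_a^+$ and the possibly infinite excursions: justifying that the Esscher transform restricts to $\mathcal{F}_{\tau_a^+}$ on $\{\tau_a^+<\infty\}$ with this event of full $\mathbb{P}^{\Phi(q)}$-mass, and that the indicators $1_{\{\zeta(\mathbf{e}_v)<\infty\}}$ faithfully encode the killing of the excursion process at its first infinite excursion when the exponential formula is applied. The identity $\Phi(q)=\overline{N}(1-e^{-q\zeta}1_{\{\zeta<\infty\}})$ — essentially the Laplace exponent of the ascending ladder-time subordinator $\tau^+$ — is standard, so once it is in hand the remaining manipulations are routine.
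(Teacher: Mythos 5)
Your proposal is correct and follows essentially the same route as the paper: the Esscher transform evaluated at $\tau_a^+$ (using $X_{\tau_a^+}=a$ and $\tau_a^+<\infty$ a.s.\ under $\mathbb{P}^{\Phi(q)}$), the decomposition $\tau_a^+=\sum_{0<v\le a}\zeta(\mathbf{e}_v)$, the exponential formula with the indicators $1_{\{\zeta(\mathbf{e}_v)<\infty\}}$ accounting for the killing at the first infinite excursion, and cancellation of the prefactor via $\Phi(q)=\overline{N}\left(1-e^{-q\zeta}1_{\{\zeta<\infty\}}\right)+\overline{N}(\zeta=\infty)$. The only cosmetic difference is that the paper establishes this last identity beforehand from a formula in Bertoin's book and splits the finite- and infinite-lifetime excursion processes explicitly as independent Poisson point processes, whereas you extract the identity from the same computation by taking $\varphi\equiv 0$ (equivalently, from the Laplace exponent of the ladder-time subordinator); both are valid.
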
 

\begin{proof}[Proof of Lemma \ref{LmPhiq}.]
For $q>0$, from equation (7) in \cite[p.120]{bertoin1996levy} with the function $q1_{(0,\infty)}$ we have
\begin{eqnarray*}
\overline{N}\left(1-e^{-q\zeta} \right)&=& \Phi(q) \mathbb{E} \left[ \int_0^\infty dt qe^{-qt} 1_{(0,\infty)}(S_t-X_t) \right]\\
&=& \Phi(q) \mathbb{E} \left[ \int_0^\infty dt qe^{-qt} \right] \\
&=& \Phi(q),
\end{eqnarray*} 
where the second equation follows from the fact that increasing times of a SNLP with unbounded variation have Lebesgue measure 0. For $q>0,$ we write $$\overline{N}\left(1-e^{-q\zeta} \right)=\overline{N}\left(\left[1-e^{-q\zeta} \right],\zeta<\infty \right)+ \overline{N}\left(\zeta=\infty \right)$$ and taking the limit as $q\downarrow 0$ we obtain $\overline{N}(\zeta=\infty)=\Phi(0)$.

Now consider $a>0$ and $g:(0,\infty)\times (\mathcal{E}\cup \{\delta \}) \to \mathbb{R}_+$ measurable and bounded (with $g(t,\delta)=0$). Observe that $\exp\left\{ -\sum_{0<t\leq a} g(t,\mathbf{e}_t) \right\} 1_{\{\tau_a^+<\infty\}}$ is $\mathcal{F}_{\tau_a^+}-$measurable and integrable, since those excursions depend on the trajectory of the process up to $\tau_a^+$. Since $\limsup_{t\to \infty} X_t= \infty,$ $\mathbb{P}^{\Phi(q)}-$a.s., then $\tau_a^+<\infty,$ a.s. and therefore $$\mathbb{E}^{\Phi(q)} \left[\exp\left\{ -\sum_{0<t\leq a} g(t,\mathbf{e}_t)\right\} \right] = \mathbb{E}^{\Phi(q)} \left[\exp\left\{ -\sum_{0<t\leq a} g(t,\mathbf{e}_t)\right\} 1_{\{\tau_a^+ <\infty \}} \right].$$ Hence, using the Esscher transform
\begin{eqnarray*}
\mathbb{E}^{\Phi(q)} \left[\exp\left\{ -\sum_{0<t\leq a} g(t,\mathbf{e}_t)\right\} \right] &=& \mathbb{E}^{\Phi(q)} \left[\exp\left\{ -\sum_{0<t\leq a} g(t,\mathbf{e}_t)\right\} 1_{\{\tau_a^+ <\infty \}} \right] \\
&=& \mathbb{E} \left[\exp\left\{\Phi(q)X_{\tau_a^+} -q\tau_a^+ -\sum_{0<t\leq a} g(t,\mathbf{e}_t)\right\} 1_{\{\tau_a^+ <\infty \}} \right] \\
&=& e^{a\Phi(q)} \mathbb{E} \left[\exp\left\{  -\sum_{0<t\leq a} [q\zeta(\mathbf{e}_t)+g(t,\mathbf{e}_t)]1_{\{\zeta(\mathbf{e}_t) < \infty \}}\right\} \right. \\
&\ & \left. \times 1_{\{\#\{(t,\mathbf{e}_t): t\in (0,a],\zeta(\mathbf{e}_t)=\infty \}=0\}} \right].
\end{eqnarray*}

Since the process of excursions of finite lifetime and infinite lifetime are independent, the last line equals $$e^{a\overline{N}\left(1-e^{-q\zeta} \right)} \mathbb{E} \left[\exp\left\{  -\sum_{0<t\leq a} [q\zeta(\mathbf{e}_t)+g(t,\mathbf{e}_t)] 1_{\{\zeta(\mathbf{e}_t)<\infty \}}\right\} \right] \mathbb{P} \left( \#\{(t,\mathbf{e}_t): t\in (0,a],\zeta(\mathbf{e}_t)=\infty \}=0 \right).$$ Using the exponential formula we obtain the further equalities
\begin{align*} 
& = \exp\left\{a\overline{N}\left(1-e^{-q\zeta} \right)\right\} \exp\left\{-\int_0^a ds \overline{N}\left(\left[1-e^{-q\zeta -g(s,\mathbf{e})} \right] 1_{\{\zeta<\infty\}} \right)\right\} e^{-a\overline{N}\left(1_{\{\zeta=\infty)\}} \right)} \\
&= \exp\left\{ a\overline{N}\left(\left[1-e^{-q\zeta}\right]1_{\{\zeta<\infty \}} \right) + a\overline{N}\left(1_{\{\zeta=\infty \}} \right) - a\overline{N} \left(1_{\{\zeta=\infty \}} \right)\right\}\\
&\qquad \times \exp\left\{-\int_0^a ds \overline{N}\left( \left[1-e^{-q\zeta -g(s,\mathbf{e})} \right] 1_{\{\zeta<\infty\}} \right)\right\} \\
&= \exp\left\{-\int_0^a ds \overline{N}\left( \left[1-e^{-q\zeta -g(s,\mathbf{e})} -\left(1-e^{-q\zeta} \right)\right] 1_{\{\zeta<\infty \}} \right)\right\} \\
&= \exp\left\{-\int_0^a ds \overline{N}\left(  e^{-q\zeta} 1_{\{\zeta<\infty \}} \left[1-e^{-g(s,\mathbf{e})} \right] \right)\right\},
\end{align*}
which is exactly the Laplace transform of the functional $g$ of a Poisson point process with intensity $ds\otimes \overline{N}\left(e^{-q\zeta} 1_{\{\zeta<\infty \}} d\mathbf{e} \right)$.

Since this is valid for any $a>0$ and any function $g$ as before, the monotone class theorem implies that under $\mathbb{P}^{\Phi(q)}$, $(t,\mathbf{e}_t)_{t>0}$ is a Poisson point process with intensity $ds \otimes \overline{N}\left(e^{-q\zeta} 1_{\{\zeta<\infty\}} d\mathbf{e} \right)$.
\end{proof}

\section{Proofs}\label{proofs}
We start by proving a Lemma that will prove very useful in establishing Theorems~\ref{Th1} and \ref{Th2}.  
\begin{lemma}\label{Lm1}

\begin{itemize}
\item[i)] The Laplace transform of the functional $\int_0^T f(S_t,X_t)dt$ on the event $\{ \tau_a^+ < \tau_{b}^-\}$ starting from $x\in (b,a)$ is given by
\begin{equation}\label{L1eq1}
\begin{split}
&\mathbb{E}_x\left(\exp\left\{-\int^{T}_0 dt f(S_t, X_t)\right\} , \tau^{+}_{a}<\tau^{+}_{b}\right)\\
&=\frac{W(x-b)}{W(a-b)}\exp\left\{-\int^{a}_{x}ds\overline{N}\left(1-\exp\left\{-\int^{\zeta}_{0} duf(s, s-\mathbf{e}(u))\right\}, H<s-b\right)\right\}.
\end{split}
\end{equation}

\item[ii)]For $g,h: \mathbb{R} \to \mathbb{R}$ measurable and bounded and $x\in (b,a)$,
\begin{equation}\label{L1eq2}
\begin{split}
& \mathbb{E}_x \left[ g\left(S_{\tau_{b}^-} \right)h\left(X_{\tau_{b}^--},X_{\tau_{b}^-} \right) \exp \left\{ - \int_0^{T} dt f(S_t,X_t) \right\}, \tau_{b}^- < \tau_{a}^+ \right] \\
&=\int_x^{a} dzg(z) \frac{W(x-b)}{W(z-b)} \exp \left\{-\int_{x}^z ds \overline{N} \left(1-\exp \left\{ -\int_0^\zeta du f(s,s-\mathbf{e}(u))\right\},H<s-b \right) \right\} \\
&\times  \overline{N} \left(h\left(z-\mathbf{e} \left(\tilde{\tau}_{z-b}^+- \right),z-\mathbf{e} \left(\tilde{\tau}_{z-b}^+ \right) \right)\exp \left\{-\int_0^{\tilde{\tau}^+_{z-b}}du f(z,z-\mathbf{e}(u)) \right\} , H>z-b\right).
\end{split}
\end{equation}

In particular, the Laplace transform of the functional $\int_0^T dt f(S_t,X_t)$ on the event $\{\tau_{b}^-<\tau_a^+ \}$ is obtained taking $g, h\equiv 1$.
\end{itemize}
\end{lemma}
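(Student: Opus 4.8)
The plan is to prove both identities of Lemma~\ref{Lm1} simultaneously by conditioning on the excursion process of the reflected process $S-X$ below level $a$, using the decomposition of $\int_0^{\tau_a^+}dt\,f(S_t,X_t)$ into a sum over excursions established in Section~\ref{Preliminaries}, together with the exponential formula for Poisson point processes.

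First I would reduce to the case where $X$ drifts to $+\infty$ (or at least $\limsup X_t=\infty$ a.s.), so that the excursion process $\{(t,\mathbf{e}_t):t>0\}$ is a genuine Poisson point process with intensity $ds\otimes\overline{N}(d\mathbf{e})$. For a general SNLP of unbounded variation one passes to $\mathbb{P}^{\Phi(q)}$ via the Esscher transform and invokes Lemma~\ref{LmPhiq}; taking $f$ to absorb the extra $q\zeta$ term and then letting $q$ vary (or simply noting the argument goes through verbatim once the PPP property holds) recovers the general statement. By spatial homogeneity we may translate so that $b=0$ and $x\in(0,a)$; starting from $x$, the process first runs up to its supremum, and the excursions indexed by local time $v\in(0,a)$ (equivalently, supremum level $v\in(0,a)$, since $S$ serves as local time at the maximum) are the objects to track. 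Note that $\{\tau_a^+<\tau_0^-\}$ is exactly the event that none of the excursions $\mathbf{e}_v$ with $v\in(x,a)$ reaches height exceeding $v$ — i.e. every such excursion stays above level $0$ — while on the complementary event the \emph{first} excursion $\mathbf{e}_z$ (at some level $z\in(x,a)$) with $H(\mathbf{e}_z)>z$ is the one that sends $X$ below $0$, and on that excursion $S_{\tau_0^-}=z$, $X_{\tau_0^--}=z-\mathbf{e}_z(\tilde\tau_z^+-)$, $X_{\tau_0^-}=z-\mathbf{e}_z(\tilde\tau_z^+)$.

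For part (i): on $\{\tau_a^+<\tau_0^-\}$, using the excursion decomposition $\int_0^{\tau_a^+}dt\,f(S_t,X_t)\stackrel{\mathrm{a.s.}}{=}\sum_{0<v\le a}\int_0^{\zeta(\mathbf{e}_v)}du\,f(v,v-\mathbf{e}_v(u))$, and factoring the indicator $1_{\{\tau_a^+<\tau_0^-\}}=\prod_{x<v<a}1_{\{H(\mathbf{e}_v)\le v\}}$ (the excursions below level $x$ are automatically below level $0$ because $X$ started at $x$ and $S\ge x$ throughout, so they carry no constraint), I would compute the expectation via the exponential formula: the contribution of excursions with $v\le x$ gives a factor I can identify — using \eqref{equation1} and the relation $W'(s)/W(s)=\overline{N}(H>s)$ — with $W(x)$ up to the boundary normalization coming from excursions below level $0$; more cleanly, I would write the whole thing as $\exp\{-\int_0^a ds\,\overline{N}(1-e^{-\int_0^\zeta f(s,s-\mathbf{e}(u))du}1_{\{H<s\}}) - \int_0^a ds\,\overline{N}(H\ge s,\text{stuff})\}$ and split the $\overline N$ integrand as $(1-e^{-\int f}) + e^{-\int f}1_{\{H\ge s\}}$; the pure $1_{\{H\ge s\}}$ part over $(0,x)$ versus $(x,a)$ reassembles into $W(x)/W(a)$ by \eqref{equation1}, and the remaining $f$-dependent part over $(x,a)$ restricted to $\{H<s\}$ gives the stated exponential. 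For part (ii): I would use the compensation (master) formula for PPPs — the excursion that exits below $0$ arrives as the first point of a thinned process — writing the expectation as an integral $\int_x^a dz$ over the level $z$ at which this terminal excursion occurs, where the integrand is (probability that all excursions at levels in $(x,z)$ stay below their level, which by the same computation as in (i) gives $\frac{W(x)}{W(z)}\exp\{-\int_x^z\overline N(1-e^{-\int f},H<s)ds\}$) times (the $\overline N$-expectation over the terminal excursion of $g(z)h(\cdots)e^{-\int_0^{\tilde\tau_z^+}f(z,z-\mathbf{e}(u))du}1_{\{H>z\}}$).

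The main obstacle I anticipate is the careful bookkeeping at the two "boundaries": correctly handling the excursions at supremum levels in $(0,x)$ (showing they contribute no constraint for the exit event but do contribute to the $f$-functional — or, depending on how one sets things up, that the functional $\int_0^{\tau_x^+}$ is already absorbed into the normalization and what survives is only the $(x,a)$ part), and justifying the application of the strong Markov / compensation formula at the random level $z$ where the killing excursion occurs, including measurability of the pre-level-$z$ excursion configuration and the independence used to factor the terminal excursion's $\overline N$-expectation. A secondary technical point is justifying the a.s. excursion decomposition of $\int_0^T f(S_t,X_t)dt$ when $f$ depends on $S$ as well as $X$ — this is exactly the computation quoted in Section~\ref{Preliminaries} using that $\{t:S_t=X_t\}$ has zero Lebesgue measure under unbounded variation — and checking that all the $\overline N$-integrals appearing are finite (guaranteed by boundedness of $f$ and $\overline N(1-e^{-q\zeta})=\Phi(q)<\infty$, $\overline N(H>s)=W'(s)/W(s)<\infty$).
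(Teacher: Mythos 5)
Your proposal is correct and follows essentially the same route as the paper: decompose the functional over the excursions of $S-X$ from the supremum, identify $\{\tau_a^+<\tau_b^-\}$ with the absence of an excursion at supremum level $v$ whose height exceeds $v-b$, and apply the exponential formula for part (i) and the compensation formula at the first barrier-crossing excursion for part (ii); your preliminary Esscher reduction is harmless but unnecessary, as the paper applies these formulas directly. One bookkeeping slip to fix when writing it out: after normalizing $b=0$ and starting from $x$ there are \emph{no} excursions at supremum levels in $(0,x)$, so the factor $W(x-b)/W(a-b)=\exp\{-\int_x^a \overline{N}(H>s-b)\,ds\}$ arises entirely from the integral over $(x,a)$ and not from a split of a $(0,a)$ integral at $x$.
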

We prove this result by recurring to excursion theory for the process reflected and hence tools from Poisson point processes of excursions. 
\begin{proof}[Proof of Lemma \ref{Lm1}]

Using the decomposition of functionals from the previous section, for the case when $X$ leaves the interval $[b,a]$ from above we have

\begin{equation*}
\begin{split}
&\mathbb{E}_x\left(\exp\left\{-\int^{T}_0 dtf(S_t, X_t)\right\}, \tau^{+}_{a}<\tau^{+}_{b}\right)\\
&= \mathbb{E}\left(\exp\left\{-\int^{\tau^{+}_{a-x}}_0 dt f(x+S_t, x+X_t)\right\}, \tau^{+}_{a-x}<\tau^{-}_{b-x}\right)\\
&=\mathbb{E}\left(\exp\left\{-\sum_{0<s\leq a-x}\int^{\tau^{+}_{s}}_{\tau^{+}_{s-}} dtf(x+S_t, x+X_t)\right\}, \tau^{+}_{a-x}<\tau^{-}_{b-x}\right)\\
&=\mathbb{E}\left(\exp\left\{-\sum_{0<v\leq a-x}\int^{\zeta(\mathbf{e}_v)}_{0} du f\left(x+v, x+v-\mathbf{e}_v(u)\right)1_{\{H(\mathbf{e}_v)<{v-b+x}\}}\right\} 1_A \right),
\end{split}
\end{equation*}
where $A=\{\#\{(v,\mathbf{e}_v): H(\mathbf{e}_v)>v-b+x, 0<v\leq a-x\}=0\}$ indicates that there are no excursions before reaching level $a-x$ whose height is big enough to exit the interval by below, and since $X$ has no positive jumps this coincides with the event $\{\tau_{a-x}^+ < \tau_{b-x}^- \}$.

The point process of excursions can be divided into $$\{(v,\mathbf{e}_v): H(\mathbf{e}_v)>v+b+x, 0<v<\infty\}$$ and $$\{(v,\mathbf{e}_v): H(\mathbf{e}_v)\leq v+b+x, 0<v<\infty\},$$ which are independent Poisson point processes. Thus, by the exponential formula
\begin{equation*}
\begin{split}
&\mathbb{E}_x\left(\exp\left\{-\int^{T }_0 dtf(S_t, X_t)\right\}, \tau^{+}_{a}<\tau^{-}_{b}\right)\\
&=\exp\left\{-\int^{a-x}_0ds\overline{N}\left(1-\exp\left\{-\int^{\zeta}_{0} duf(x+s, x+s-\mathbf{e}(u))\right\}, H<s-b+x\right)\right\}\\
&\qquad \times \exp\left\{-\int^{a-x}_0ds\overline{N}\left( H>s-b+x\right)\right\}\\
&=\exp\left\{-\int^{a}_{x}ds\overline{N}\left(1-\exp\left\{-\int^{\zeta}_{0} du f(s, s-\mathbf{e}(u))\right\}, H<s-b\right)\right\}\\
&\qquad \times \exp\left\{-\int^{a}_{x}ds\overline{N}\left( H>s-b\right)\right\}.
\end{split}
\end{equation*} 
Taking $f\equiv 0$ we recover equation (\ref{equation1}):
\begin{equation*}
\mathbb{P}_x\left(\tau^{+}_{a}<\tau^{-}_{b}\right)=\frac{W(x-b)}{W(a-b)}=\exp\left\{-\int^{a-b}_{x-b}ds\overline{N}\left( H>s\right)\right\}=\exp\left\{-\int^{a}_{x}ds\overline{N}\left( H>s-b\right)\right\}
\end{equation*}
and hence we can express
\begin{equation*}
\begin{split}
&\mathbb{E}_x\left(\exp\left\{-\int^{T}_0 dtf(S_t, X_t)\right\}, \tau^{+}_{a}<\tau^{-}_{b}\right)\\
&=\frac{W(x-b)}{W(a-b)}\exp\left\{-\int^{a}_{x}ds\overline{N}\left(1-\exp\left\{-\int^{\zeta}_{0} du f(s, s-\mathbf{e}(u))\right\}, H<s-b\right)\right\}.
\end{split}
\end{equation*} 

We now continue with the case where $X$ leaves the interval $[b,a]$ by below.  We denote by $g_{\tau^{-}_{b}}$ the last time where $X$ reaches a new supremum before passing below $b$,
$$g_{\tau^{-}_{b}}=\sup\{s<\tau^{-}_{b}: (S-X)_s=0\}.$$ Thus $g_{\tau^{-}_{b}}$ is the left extrema of the excursion from the supremum straddling $\tau^{-}_{b}.$ %And since we will be working on the event $\{\tau^{-}_{b-x}<\tau^{+}_{a-x}\}$, and the local time scale is that of the maximum, we should have that $0<g_{\tau^{-}_{b}}<a$. 
Next, we use that $g_{\tau^{-}_{b}}$ is the left extrema of the first excursion from the supremum whose height, seen from the level of the supremum, is large enough for the path to exit the interval $[b,a].$ This argument is made precise in the following calculation,   
\begin{equation*}
\begin{split}
&\mathbb{E}_x\left(g \left(S_{\tau^{-}_{b}} \right) h\left(X_{\tau_b^--},X_{\tau_b^-} \right) \exp\left\{-\int^{T}_0 dtf(S_t, X_t)\right\}, \tau^{-}_{b}<\tau^{+}_{a}\right)\\
&=\mathbb{E}_x\left(g \left(S_{\tau^{-}_{b}} \right) h\left(X_{\tau_b^--},X_{\tau_b^-} \right) \exp\left\{-\int^{g_{\tau^{-}_{b}}}_0 dt f(S_t, X_t)\right\}\exp\left\{-\int^{\tau^{-}_{b}}_{g_{\tau^{-}_{b}}} dt f(S_t, X_t)\right\}, \tau^{-}_{b}<\tau^{+}_{a}\right)\\
&=\mathbb{E}\left(g \left(x+S_{\tau^{-}_{b-x}} \right) h\left(x+X_{\tau_b^--},x+X_{\tau_b^-} \right)\exp\left\{-\int^{g_{\tau^{-}_{b-x}}}_0 dtf(x+S_t, x+X_t)\right\} \right. \\
& \qquad \left. \times \exp\left\{-\int^{\tau^{-}_{b-x}}_{g_{\tau^{-}_{b-x}}} dtf(x+S_t, x+X_t)\right\} , \tau^{-}_{b-x}<\tau^{+}_{a-x}\right)\\
&=\mathbb{E}\left(\sum_{v\in \mathcal G} g(x+S_v)1_{\{S_v<a-x\}} \exp\left\{-\int^{v}_{0} dt f(x+S_t, x+X_t)\right\}1_{\{\#\{(u,\mathbf{e}_u): H(\mathbf{e}_u)>u-b+x, 0<u<S_v\}=0\}}\right.\\
&\qquad  \times  h\left(x+S_v - \mathbf{e}_{S_v}\left(\tilde{\tau}_{S_v-b+x}- \right),x+S_v - \mathbf{e}_{S_v}\left(\tilde{\tau}_{S_v-b+x} \right) \right)\\
&\qquad  \times \left. \exp\left\{-\int^{\widetilde{\tau}^{+}_{S_v-b+x}}_0 du f(x+S_v, x+S_v-\mathbf{e}_{S_v}(u))\right\}1_{\{H(\mathbf{e}_{S_v})>S_v-b+x\}}\right)
\end{split}
\end{equation*}  
Where $\widetilde{\tau}_{\ell}$ denotes the first passage time above level $\ell$ for the excursion process. 
Applying the compensation formula for Poisson point processes, we obtain
\begin{equation*}
\begin{split}
&\mathbb{E}_x\left(g \left(S_{\tau^{-}_{-b}} \right)\exp\left\{-\int^{T }_0 dt f(S_t, X_t)\right\}, \tau^{-}_{b}<\tau^{+}_{a}\right)\\
&=\mathbb{E}\left(\int^{\infty}_{0} dS_v g(x+S_v)1_{\{S_v<a-x\}}\exp\left\{-\int^{v}_{0} dt f(x+S_t, x+X_t)\right\}1_{\{\#\{(u,\mathbf{e}_u): H(\mathbf{e}_u)>u-b+x, 0<u<S_{v}\}=0\}}\right.\\
&\qquad  \times \overline{N}\left[ \vphantom{\exp\left\{-\int^{\widetilde{\tau}^{+}_{S_v-b+x}}_0 du f(x+S_v, x+S_v-\mathbf{e}(u))\right\}} h\left(x+S_v - \mathbf{e}\left(\tilde{\tau}_{S_v-b+x}- \right),x+S_v - \mathbf{e}\left(\tilde{\tau}_{S_v-b+x} \right) \right) \right. \\
&\qquad \left. \left. \times \exp\left\{-\int^{\widetilde{\tau}^{+}_{S_v-b+x}}_0 du f(x+S_v, x+S_v-\mathbf{e}(u))\right\}1_{\{H(\mathbf{e})>S_v-b+x\}}\right]\right)\\
&=\mathbb{E}\left(\int^{\infty}_{0} dz g(x+z)1_{\{z<a-x\}}\exp\left\{-\int^{\tau^{+}_z}_{0} dtf(x+S_t, x+X_t)\right\}1_{\{\#\{(u,\mathbf{e}_u): H(\mathbf{e}_u)>u-b+x, 0<u<z\}=0\}}\right.\\
&\qquad  \times \overline{N}\left[ \vphantom{\exp\left\{-\int^{\widetilde{\tau}^{+}_{z-b+x}}_0 duf(x+z, x+z-\mathbf{e}(u))\right\}} h\left(x+z - \mathbf{e}\left(\tilde{\tau}_{z-b+x}- \right),x+z - \mathbf{e}\left(\tilde{\tau}_{z-b+x} \right) \right) \right. \\
&\qquad \left. \left. \times \exp\left\{-\int^{\widetilde{\tau}^{+}_{z-b+x}}_0 duf(x+z, x+z-\mathbf{e}(u))\right\}1_{\{H>z-b+x\}}\right]\right).
\end{split}
\end{equation*} 
 
Using Fubini's theorem and proceeding as in the proof of the first part of the Lemma, we derive the desired identity 
\begin{equation*}
\begin{split}
&\mathbb{E}_x\left(g\left(S_{\tau^{-}_{b}} \right)h\left(X_{\tau_b^--},X_{\tau_b^-} \right)\exp\left\{-\int^{T }_0 dtf(S_t, X_t)\right\}, \tau^{-}_{b}<\tau^{+}_{a}\right)\\
&=\int^{a-x}_{0} dz g(x+z)\mathbb{E}\left(\exp\left\{-\int^{\tau^{+}_z}_{0} dt f(x+S_t, x+X_t)\right\}, \tau^{+}_{z}<{\tau^{-}_{b-x}}\right)\\
&  \times  \overline{N}\left[\vphantom{\exp\left\{-\int^{\widetilde{\tau}^{+}_{z-b+x}}_0 duf(x+z, x+z-\mathbf{e}(u))\right\}} h\left(x+z - \mathbf{e}\left(\tilde{\tau}_{z-b+x}- \right),x+z - \mathbf{e}\left(\tilde{\tau}_{z-b+x} \right) \right) \right. \\
& \left. \times \exp\left\{-\int^{\widetilde{\tau}^{+}_{z-b+x}}_0 duf(x+z, x+z-\mathbf{e}(u))\right\},H>z-b+x\right]\\
&= \int^{a-x}_{0} dz g(x+z)\frac{W(x-b)}{W(z+x-b)}\\
& \times\exp\left\{-\int^{z}_{0}ds\overline{N}\left(1-\exp\left\{-\int^{\zeta}_{0} duf(x+s, x+s-\mathbf{e}(u))\right\}, H<s-b+x\right)\right\}\\
& \times \overline{N}\left[ \vphantom{\exp\left\{-\int^{\widetilde{\tau}^{+}_{z-b+x}}_0 duf(x+z, x+z-\mathbf{e}(u))\right\}} h\left(x+z - \mathbf{e}\left(\tilde{\tau}_{z-b+x}- \right),x+z - \mathbf{e}\left(\tilde{\tau}_{z-b+x} \right) \right) \right. \\
& \left. \times \exp\left\{-\int^{\widetilde{\tau}^{+}_{z-b+x}}_0 duf(x+z, x+z-\mathbf{e}(u))\right\},H>z-b+x\right] \\
&=\int^{a}_{x} dz g(z)\frac{W(x-b)}{W(z-b)}\exp\left\{-\int^{z}_{x}ds\overline{N}\left(1-\exp\left\{-\int^{\zeta}_{0} du f(s, s-\mathbf{e}(u))\right\}, H<s-b\right)\right\}\\
& \times \overline{N}\left[h\left(z - \mathbf{e}\left(\tilde{\tau}_{z-b}- \right),z - \mathbf{e}\left(\tilde{\tau}_{z-b} \right) \right)\exp\left\{-\int^{\widetilde{\tau}^{+}_{z-b}}_0 du f(z, z-\mathbf{e}(u))\right\},H>z-b\right]
\end{split}
\end{equation*}
This finishes the proof of the Lemma. 
\end{proof}

\begin{proposition} \label{Pr1}
For $b\in \mathbb{R}$, one can alternatively define $W_f(x,b)=0$ for $x\leq b$ and for $x\in (b,A+b)$,  
\begin{align} \label{DefWf}
W_f(x,b)&= \exp \left\{-\int_{x-b}^{A} ds \overline{N}(\zeta<\infty,H>s) \right\}  \nonumber \\
& \times \exp\left\{ \int_0^{x-b} ds \left[\overline{N} \left(1-\exp \left\{-\int_0^\zeta du f(s+b,s+b-\mathbf{e}(u)) \right\}, H<s \right)+\overline{N}\left(\zeta=\infty \right) \right] \right\} \nonumber \\
&= \exp \left\{-\int_{x}^{A+b} ds \overline{N}(\zeta<\infty,H>s-b) \right\}  \\
& \times \exp\left\{ \int_0^{x} ds \left[\overline{N} \left(1-\exp \left\{-\int_0^\zeta du f(s,s-\mathbf{e}(u)) \right\}, H<s-b \right) +\overline{N}(\zeta=\infty) \right] \right\}, \nonumber
\end{align}
where $A=\infty$ if $\Psi'(0+)\neq 0$ and is a positive constant if $\Psi'(0+)=0$. In case $\Psi'(0+)\geq 0$, the measure $\overline{N}$ assigns mass 0 to excursions with infinite lifetime and therefore one can omit the terms $\zeta<\infty$ and $\zeta=\infty$ in the definition. This definition coincides with (\ref{DefWfN}) except for a positive constant and in particular it also satisfies the exit problem in (\ref{exitWf}).
\end{proposition}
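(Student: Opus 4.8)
The plan is to show that the alternative expression \eqref{DefWf} differs from the original definition \eqref{DefWfN} only by a multiplicative constant that does not depend on $x$, and then invoke Remark 2) to conclude that the exit identity \eqref{exitWf} is unaffected. The starting point is the classical excursion-theoretic factorization of $W$ recalled in \eqref{equation1}, which, written in the form $W(x-b) = W(\infty)\,\exp\{-\int_{x-b}^\infty ds\,\overline{N}(H>s)\}$ when $\Psi'(0+)>0$ (and with the appropriate finite-horizon analogue when $\Psi'(0+)=0$), lets me replace the factor $W(x-b)$ appearing in \eqref{DefWfN}. Once this substitution is made, the quantity $\overline{N}(H>s)$ must be split according to whether the excursion has finite or infinite lifetime: $\overline{N}(H>s) = \overline{N}(\zeta<\infty, H>s) + \overline{N}(\zeta=\infty)$, using that on $\{\zeta=\infty\}$ one automatically has $H=\infty>s$. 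This is exactly the decomposition that produces the two exponential factors in \eqref{DefWf}, with the $\overline{N}(\zeta=\infty)$ term migrating into the $\int_0^{x-b}$ integral and the $\overline{N}(\zeta<\infty,H>s)$ term staying in the tail integral $\int_{x-b}^A$.

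Concretely, the steps are: (1) From \eqref{equation1} and the monotonicity of $W$, derive $W(u) = W(A)\exp\{-\int_u^A ds\,\overline{N}(H>s)\}$ for $u\in(0,A)$, where $A=\infty$ in the non-critical case and $A$ is any fixed positive level in the critical case $\Psi'(0+)=0$ (here one uses that $\int_0^\infty \overline{N}(H>s)\,ds = \infty$ precisely when $W(\infty)=\infty$, i.e.\ when $\Psi'(0+)\le 0$, so a finite cutoff is needed only to keep $W(A)<\infty$). (2) Substitute $u=x-b$ into \eqref{DefWfN}, obtaining $W_f(x,b) = W(A)\exp\{-\int_{x-b}^A ds\,\overline{N}(H>s)\}\exp\{\int_0^{x-b} ds\,\overline{N}(1-e^{-\int_0^\zeta f},H<s)\}$. (3) Split $\overline{N}(H>s)$ as above and rearrange: the $\overline{N}(\zeta=\infty)$ contribution to the tail integral is $\overline{N}(\zeta=\infty)(A-(x-b))$, and since $\int_0^A ds\,\overline{N}(\zeta=\infty) = A\,\overline{N}(\zeta=\infty)$ is a constant, moving it to the $\int_0^{x-b}$ integral changes $W_f(x,b)$ only by the $x$-independent factor $W(A)e^{-A\overline{N}(\zeta=\infty)}$. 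Using the identity $\overline{N}(\zeta=\infty)=\Phi(0)$ established in the proof of Lemma \ref{LmPhiq} (so this term vanishes exactly when $\Phi(0)=0$), one checks that in the case $\Psi'(0+)\ge0$ the $\zeta<\infty/\zeta=\infty$ qualifiers are vacuous, matching the remark in the statement. (4) Conclude by Remark 2) that \eqref{DefWf} satisfies \eqref{exitWf}.

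The main obstacle is the critical case $\Psi'(0+)=0$, where $W(\infty)=\infty$ and the tail integral $\int_{x-b}^\infty \overline{N}(H>s)\,ds$ diverges, so the clean factorization $W(u)=W(\infty)\exp\{-\int_u^\infty\overline{N}(H>s)ds\}$ is unavailable and one genuinely must truncate at a finite level $A$. I would handle this by fixing $A\in(0,\infty)$ arbitrarily, carrying out steps (1)--(3) on the interval $(b,A+b)$, and noting that changing $A$ only rescales $W_f$ by a constant, so the definition is consistent on overlaps and patches together to all of $(b,\infty)$ — which is why the statement restricts to $x\in(b,A+b)$ with $A$ allowed to be any positive constant in the critical regime. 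A secondary point to verify carefully is that $\overline{N}(\zeta<\infty,H>s)$ is integrable near $s=0$ on the truncated interval; this follows because $\overline{N}(H>s)=W'(s)/W(s)$ is locally integrable (indeed $\int_0^A \overline{N}(H>s)ds = \log(W(A)/W(0+)) <\infty$ in the unbounded-variation case, where $W(0+)>0$ fails — so more precisely one uses $\int_\epsilon^A$ and lets the finiteness of $W_f$ itself, already part of \eqref{DefWfN}, absorb the behaviour at $0$), and $\overline{N}(\zeta=\infty)=\Phi(0)<\infty$ is just a finite constant. With these points in place the computation is a direct rearrangement of exponentials.
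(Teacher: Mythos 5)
Your argument is correct in substance and reaches the same conclusion, but it organizes the work differently from the paper. The paper proceeds by cases on the sign of $\Psi'(0+)$: for $\Psi'(0+)>0$ it uses $W(a)=\mathbb{P}(\underline{X}_\infty\geq -a)/\Psi'(0+)$ to let $A\to\infty$ directly; for $\Psi'(0+)=0$ it truncates at finite $A$ exactly as you do; and for $\Psi'(0+)<0$ it invokes the Esscher transform and Lemma \ref{LmPhiq} to identify $\overline{N}(\zeta<\infty,\cdot)$ as the excursion measure of the tilted process $\mathbb{P}^{\Phi(0)}$, together with $W(u)=e^{\Phi(0)u}W^{\Phi(0)}(u)$, which is where the $\overline{N}(\zeta=\infty)=\Phi(0)$ term in the second exponential comes from. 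You instead split $\overline{N}(H>s)=\overline{N}(\zeta<\infty,H>s)+\overline{N}(\zeta=\infty)$ inside the finite-$A$ factorization (using, correctly, that $H=\infty$ on $\{\zeta=\infty\}$) and then pass $A\to\infty$; this is a more unified computation that avoids the change of measure. The one step you leave implicit is the passage to the limit in the case $\Phi(0)>0$: your rearrangement shows $W(A)e^{-A\Phi(0)}=W(u)e^{-u\Phi(0)}\exp\{\int_u^A ds\,\overline{N}(\zeta<\infty,H>s)\}$ is increasing in $A$, so you still must argue that $\int^\infty ds\,\overline{N}(\zeta<\infty,H>s)<\infty$ (equivalently that $W(A)e^{-A\Phi(0)}$ converges to a finite positive constant, namely $W^{\Phi(0)}(\infty)$); the cleanest justification is precisely the paper's Esscher-transform identification, or the standard asymptotic $W(A)\sim e^{\Phi(0)A}/\Psi'(\Phi(0)+)$. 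Two small inaccuracies that do not affect the outcome: the finite cutoff in the critical case is needed because the tail integral $\int^\infty\overline{N}(H>s)\,ds$ diverges, not "to keep $W(A)<\infty$" (which holds for every finite $A$ regardless); and the worry about integrability of $\overline{N}(H>s)$ near $s=0$ is moot, since every integral in \eqref{DefWf} is taken over $[x-b,A]$ or pairs $\overline{N}(\cdot,H<s)$ with a bound by $\Phi(K)$, so the singularity at the origin never enters.
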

\begin{proof}[Proof of Proposition \ref{Pr1}.]
For the first exponential in (\ref{DefWf}), we begin with the case $\Psi'(0+)>0$. Denote by $\underline{X}_t$ the infimum of $X$ up to time $t$. Following the construction in \cite[p.235]{kyprianou2014fluctuations}, the scale function $W$ can be defined as $$W(a)=\frac1{\Psi'(0+)} \mathbb{P}(\underline{X}_\infty \geq -a),\ \ \ a>0.$$ In particular, since $X_t \to \infty$ a.s. as $t\to \infty$, then $\underline{X}_\infty >-\infty$ a.s. Therefore, $$\lim_{a\to \infty} W(a)= \lim_{a\to \infty} \frac1{\Psi'(0+)} \mathbb{P}(\underline{X}_\infty \geq -a) =\frac1{\Psi'(0+)} \in (0,\infty).$$ From equation (\ref{equation1}), for $0<x<a$, $$W(x)=W(a)\exp \left\{-\int_x^a ds \overline{N}(H>s) \right\}$$ and letting $a \to \infty$ we obtain $$W(x)=\frac1{\Psi'(0+)} \exp \left\{-\int_x^\infty ds \overline{N}(H>s) \right\}\in (0,\infty).$$ As a consequence, the first exponential in ($\ref{DefWf}$) coincides with $W(x-b)$ except for the constant $1/\Psi'(0+)$.

If $\Psi'(0+)=0$, for $A>0$ and $x\in (b,A+b)$ from (\ref{equation1}) $$\exp\left\{-\int_{x-b}^A ds \overline{N}\left(H>s \right) \right\}=\mathbb{P}_x \left(\tau_{A+b}^+ <\tau_b^- \right)=\frac{W(x-b)}{W(A)} \in (0,1)$$ and therefore the exponential coincides with $W(x-b)$ except for the constant $1/W(A)$. Notice that in this case we cannot let $A\to\infty$, since the term in the middle converges to $\mathbb{P}_x \left(\tau_b^- =\infty \right)$ and this probability is zero since $X$ oscillates.

Finally, if $\Psi'(0+)<0$ then $\Phi(0)>0$ and using Lemma \ref{LmPhiq} we know that $X$ diverges a.s. to $\infty$ under $\mathbb{P}^{\Phi(0)}$ and that its excursion measure is $\overline{N}\left(1_{\{\zeta<\infty \}},\cdot \right).$ In particular, from the first case, $$\exp\left\{-\int_{x-b}^\infty ds \overline{N} \left(\zeta<\infty,H>s \right) \right\}=\frac1{\Psi'(\Phi(0)+)} W^{\Phi(0)}(x-b) \in (0,\infty),$$ where $W^{\Phi(0)}$ is the scale function for $X$ under $\mathbb{P}^{\Phi(0)}$. From equation (8.16) in \cite{kyprianou2014fluctuations}, the scale function in this case can be expressed by $$W(x-b)=e^{\Phi(0)(x-b)}W^{\Phi(0)}(x-b), \ x>b.$$ Since $\Phi(0)=\overline{N}(\zeta=\infty)$ then
\begin{eqnarray*}
W_f(x,b)&=&W(x-b)\exp\left\{ \int_0^{x-b} ds \overline{N} \left(1-\exp \left\{-\int_0^\zeta du f(s+b,s+b-\mathbf{e}(u)) \right\}, H<s \right)   \right\} \\
&=& \exp \left\{(x-b) \overline{N}(\zeta=\infty) \right\} W^{\Phi(0)}(x-b) \\
&\times & \exp\left\{ \int_0^{x-b} ds \overline{N} \left(1-\exp \left\{-\int_0^\zeta du f(s+b,s+b-\mathbf{e}(u)) \right\}, H<s \right)   \right\} \\
&=& \Psi'(\Phi(0)+) \exp\left\{-\int_{x-b}^\infty ds \overline{N} \left(\zeta<\infty,H>s \right) \right\} \\
&\times & \exp\left\{ \int_0^{x-b} ds \left[\overline{N} \left(1-\exp \left\{-\int_0^\zeta du f(s+b,s+b-\mathbf{e}(u)) \right\}, H<s \right)+\overline{N}(\zeta=\infty) \right]   \right\},
\end{eqnarray*}
which proves we can define $W_f$ as in (\ref{DefWf}) except for the positive constant $\Psi'(\Phi(0)+)$.
\end{proof}

\begin{lemma} \label{Lm2}
The functions $W_f$ and $Z_{f,g,h}$ in (\ref{DefWfN}) and (\ref{DefZfg}) are well defined.
\end{lemma}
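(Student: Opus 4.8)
The plan is to verify, for every $x>b$ (the values at $x\le b$ being prescribed outright by the definitions) and for each of the two functions, three points: that the integrands appearing in (\ref{DefWfN}) and (\ref{DefZfg}) are measurable, that the integrals converge, and that no division by zero occurs. Two facts established earlier in the paper will carry most of the weight: the identity $\overline{N}\big(1-e^{-q\zeta}\big)=\Phi(q)$ for $q>0$ (obtained inside the proof of Lemma~\ref{LmPhiq}), and, since $X$ is of unbounded variation, $W\in C^1(0,\infty)$ with $\overline{N}(H>y)=W'(y)/W(y)$ (Lemma~8.2 of \cite{kyprianou2014fluctuations}); in particular $0<W(y)<\infty$ and $0<\overline{N}(H>y)<\infty$ for every $y>0$. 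Throughout, $M_f,M_g,M_h$ denote the supremum norms of $f,g,h$.

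I would first treat $W_f$. Measurability of $(s,\mathbf{e})\mapsto 1_{\{H(\mathbf{e})<s-b\}}\big(1-\exp\{-\int_0^{\zeta}du\,f(s,s-\mathbf{e}(u))\}\big)$ follows from measurability of $f$ and the fact that $\mathbf{e}$ is c\`adl\`ag, and then, $\overline{N}$ being $\sigma$-finite, Tonelli's theorem makes $s\mapsto\overline{N}\big(1-\exp\{-\int_0^{\zeta}du\,f(s,s-\mathbf{e}(u))\},H<s-b\big)$ a measurable function of $s$. For the bound, $0\le f\le M_f$ gives $\int_0^{\zeta}du\,f(s,s-\mathbf{e}(u))\le M_f\zeta$, hence $1-\exp\{-\int_0^{\zeta}du\,f(s,s-\mathbf{e}(u))\}\le 1-e^{-M_f\zeta}$, so, dropping the restriction $H<s-b$,
\[
\overline{N}\Big(1-\exp\big\{-\textstyle\int_0^{\zeta}du\,f(s,s-\mathbf{e}(u))\big\},\,H<s-b\Big)\ \le\ \overline{N}\big(1-e^{-M_f\zeta}\big)\ =\ \Phi(M_f)\ <\ \infty
\]
(the left-hand side being identically $0$ when $M_f=0$, in which case $W_f=W$ and there is nothing to check). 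Thus the exponent in (\ref{DefWfN}) lies in $[0,(x-b)\Phi(M_f)]$, the exponential is a finite positive number, and $W_f(x,b)=W(x-b)\cdot(\text{that number})\in(0,\infty)$. I record that this exponent is nonnegative, so that $W_f(z,b)\ge W(z-b)>0$ for every $z>b$; this is used below.

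Next I would handle $Z_{f,g,h}$. On $\{H>z-b\}$ the excursion does surpass level $z-b$, so there $\tilde{\tau}^+_{z-b}<\infty$ and $\kappa_b(z;f,h)$ is a genuine $\overline{N}$-integral; bounding $h$ by $M_h$ and the exponential factor by $1$ gives $|\kappa_b(z;f,h)|\le M_h\,\overline{N}(H>z-b)<\infty$ for every $z>b$, while the same Tonelli/c\`adl\`ag argument, now using that $z\mapsto\tilde{\tau}^+_{z-b}$ is nondecreasing, gives measurability of $z\mapsto\kappa_b(z;f,h)$. Since $x>b$, the bound $W_f(z,b)\ge W(z-b)>0$ holds on the whole range of integration, so the integrand of (\ref{DefZfg}) is well defined there, and, using $\overline{N}(H>y)/W(y)=W'(y)/W(y)^2=-\tfrac{d}{dy}W(y)^{-1}$,
\[
\int_x^{\infty}\Big|\frac{g(z)\,\kappa_b(z;f,h)}{W_f(z,b)}\Big|\,dz\ \le\ M_gM_h\int_x^{\infty}\frac{\overline{N}(H>z-b)}{W(z-b)}\,dz\ =\ M_gM_h\Big(\frac1{W(x-b)}-\frac1{W(\infty)}\Big)\ <\ \infty.
\]
Hence the bracket in (\ref{DefZfg}) is a finite real number and $Z_{f,g,h}(x,b)\in\mathbb{R}$. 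Finally I would observe that the two displayed right-hand sides in each of (\ref{DefWfN}) and (\ref{DefZfg}) coincide under the substitution $s\mapsto s+b$ (resp.\ $z\mapsto z+b$), so the definitions are unambiguous, which completes the verification.

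The one estimate I expect to need genuine care is the last displayed bound when $W(\infty)=\infty$, i.e.\ in the regimes $\Psi'(0+)\le 0$: there $\overline{N}(H>z-b)$ does not decay and $\int^{\infty}\overline{N}(H>z-b)\,dz=\infty$, so the crude bound $|\kappa_b(z;f,h)|\le M_h\overline{N}(H>z-b)$ on its own is useless. What rescues integrability is keeping the extra factor $1/W(z-b)$ supplied by $W_f(z,b)\ge W(z-b)$, which recasts $\overline{N}(H>z-b)/W(z-b)$ as the exact derivative $-\tfrac{d}{dz}W(z-b)^{-1}$, integrable on $(x,\infty)$ precisely because $W(x-b)>0$. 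All the remaining points are routine measurability bookkeeping and straightforward consequences of $f,g,h$ being bounded.
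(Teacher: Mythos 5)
Your argument for $W_f$ is exactly the paper's: bound $1-\exp\{-\int_0^\zeta f\}$ by $1-e^{-M_f\zeta}$ and use $\overline{N}(1-e^{-M_f\zeta})=\Phi(M_f)<\infty$. For $Z_{f,g,h}$, however, you take a genuinely different and correct route. The paper deduces convergence of $\int_x^\infty dz\, g(z)\kappa_b(z;f,h)/W_f(z,b)$ probabilistically: it rewrites the identity of Lemma~\ref{Lm1} as $\int_x^a dz\,g(z)\frac{W_f(x,b)}{W_f(z,b)}\kappa_b(z;f,h)=\mathbb{E}_x[\,g(S_{\tau_b^-})h(\cdots)e^{-\int_0^T f}\,,\,\tau_b^-<\tau_a^+]$, notes the right-hand side is bounded by $\|g\|\,\|h\|$ uniformly in $a$, and lets $a\to\infty$ by monotone/dominated convergence. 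You instead give a direct analytic estimate: $|\kappa_b(z;f,h)|\le M_h\overline{N}(H>z-b)$, $W_f(z,b)\ge W(z-b)$ because the exponent in (\ref{DefWfN}) is nonnegative, and then $\overline{N}(H>y)/W(y)=W'(y)/W(y)^2=-\frac{d}{dy}W(y)^{-1}$ (Lemma 8.2 of \cite{kyprianou2014fluctuations}, valid under the standing unbounded-variation assumption) integrates exactly to $1/W(x-b)-1/W(\infty)<\infty$. Your version is more elementary and quantitative — it yields the explicit bound $M_gM_h/W(x-b)$ on the tail integral and correctly identifies the only delicate regime, $W(\infty)=\infty$, where the cancellation $\overline{N}(H>\cdot)/W$ is essential; it also has the virtue of not routing well-definedness through the probabilistic identity. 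The paper's version buys, as a byproduct, the identification of the full integral $\int_x^\infty$ with the expectation on $\{\tau_b^-<\infty\}$, which is of independent interest. Your measurability remarks (Tonelli plus the c\`adl\`ag structure, monotonicity of $z\mapsto\tilde\tau^+_{z-b}$) are brief but adequate; the paper does not address measurability at all. Both proofs are sound.
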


\begin{proof}
For the exponential term in (\ref{DefWfN}), since $f$ is non negative and bounded, say $0\leq f \leq K$, $K>0$, then $$0\leq 1-e^{-\int _0^\zeta f(s+b,s+b-\mathbf{e}(u))du} \leq 1-e^{-K\zeta}.$$ As we have seen in the Preliminaries, $$\overline{N}(1-e^{-K\zeta})= \Phi(K)\in(0,\infty), \ \forall K>0.$$ Therefore, in (\ref{DefWfN}) we can bound the integral in the second exponential by
\begin{eqnarray*}
\int_0^{x-b} ds \overline{N} \left(1-\exp\left\{-\int_0^\zeta du f(s+b,s+b-\mathbf{e}(u)) \right\},H<s \right) &\leq & \int_0^{x-b} ds \overline{N} \left(1-e^{-K\zeta} \right) \\
&\leq & (x-b) \Phi(K) .
\end{eqnarray*}
Thus, we conclude $W_f$ is well defined.

We now work with $Z_{f,g,h}$. From Lemma \ref{Lm1}, we have for any $b<x<z$
\begin{eqnarray*}
\frac{W_f(x,b)}{W_f(z,b)}&=& \frac{W(x-b)}{W(z-b)} \exp \left\{- \int_{x}^{z} ds  \overline{N} \left(1-\exp\left\{-\int_0^\zeta du f(s,s-\mathbf{e}(u)) \right\}, H<s-b \right)\right\}.
\end{eqnarray*}
Hence, we can express (\ref{L1eq2}) as
\begin{align} \label{eqaux1}
&\mathbb{E}_x \left[ g\left(S_{\tau_{b}^-} \right)h\left(X_{\tau_b^--},X_{\tau_b^-} \right) \exp \left\{ - \int_0^{T} dt f(S_t,X_t) \right\}, \tau_{b}^- < \tau_{a}^+ \right] \nonumber \\
& =\int_x^{a} dzg(z) \frac{W_f(x,b)}{W_f(z,b)}\kappa_b(z;f,h).
\end{align}
Taking $b<x$ fixed, the left hand side of the last equation is bounded by $||g||\cdot ||h||$, $\forall a>x$. Therefore, letting $a\to \infty$ by dominated convergence we obtain
\begin{align*}
& \mathbb{E}_x \left[ g\left(S_{\tau_{b}^-} \right) h\left(X_{\tau_b^--},X_{\tau_b^-} \right) \exp \left\{ - \int_0^{\tau_b^-} dt f(S_t,X_t) \right\}, \tau_{b}^- < \infty \right] \\
& = \lim_{a\to \infty} \mathbb{E}_x \left[ g\left(S_{\tau_{b}^-} \right) h\left(X_{\tau_b^--},X_{\tau_b^-} \right) \exp \left\{ - \int_0^{T} dt f(S_t,X_t) \right\}, \tau_{b}^- < \tau_{a}^+ \right] \\
& =  \lim_{a\to \infty} \int_x^{a} dzg(z) \frac{W_f(x,b)}{W_f(z,b)}\kappa_b(z;f,h) \\
&= \int_x^{\infty} dzg(z) \frac{W_f(x,b)}{W_f(z,b)}\kappa_b(z;f,h).
\end{align*}

In particular, (\ref{DefZfg}) is well defined. This finishes the proof.
\end{proof}
We have now all the elements to provide the proof of our first main theorem. 
\begin{proof}[Proof of Theorem \ref{Th1}]

The first identity follows directly from (\ref{DefWfN}) and Lemma \ref{Lm1}, 
\begin{eqnarray*}
\frac{W_f(x,b)}{W_f(a,b)}&=& \frac{W(x-b)}{W(a-b)} \exp \left\{- \int_{x}^{a} ds  \overline{N} \left(1-\exp\left\{-\int_0^\zeta du f(s,s-\mathbf{e}(u)) \right\}, H<s-b \right)\right\} \\
&=& \mathbb{E}_x \left[\exp \left\{ -\int_{0}^{T} dt f(S_t,X_t) \right\} , \tau_a^+ < \tau_{b}^- \right].
\end{eqnarray*}

The second identity is obtained in a similar fashion, for notice the following identities hold true
\begin{align}
& Z_{f,g,h}(x,b)-\frac{W_f(x,b)}{W_f(a,b)}Z_{f,g,h}(a,b) \nonumber \\
&= W_f (x,b) \left(1 + \int_{x}^{\infty} dz g(z) \frac{\kappa_b(z;f,h)}{ W_f (z,b)} \right)-\frac{W_f(x,b)}{W_f(a,b)} \left[ W_f(a,b) \left(1 + \int_{a}^{\infty} dz g(z) \frac{\kappa_b(z;f,h)}{ W_f (z,b)} \right) \right]\nonumber \\
&= \int_{x}^{a} dz g(z)  \frac{W_f (x,b)}{W_f (z,b)} \kappa_b(z;f,h) \label{Eqgzf}\\
&= \int_x^{a} dzg(z) \frac{W(x-b)}{W(z-b)} \exp \left\{-\int_{x}^z ds \overline{N} \left(1-\exp \left\{ -\int_0^\zeta du f(s,s-\mathbf{e}(u))\right\},H<s-b \right) \right\}\nonumber  \\
& \times \overline{N} \left(h\left(z - \mathbf{e}\left(\tilde{\tau}_{z-b}- \right),z - \mathbf{e}\left(\tilde{\tau}_{z-b} \right) \right) \exp \left\{-\int_0^{\tilde{\tau}^+_{z-b}} du f(z,z-\mathbf{e}(u)) \right\} , H>z-b\right)\nonumber  \\
&= \mathbb{E}_x \left[ g\left(S_{\tau_{-b}^-} \right) h\left(X_{\tau_b^--},X_{\tau_b^-} \right) \exp \left\{ - \int_0^{T} dt f(S_t,X_t)  \right\}, \tau_{b}^- < \tau_{a}^+ \right]. \nonumber 
\end{align}We have so finished the proof of Theorem~\ref{Th2}.
\end{proof}

\begin{proof}[Proof of Theorem \ref{Th2}]

Since $X$ does not have positive jumps and has unbounded variation, then $S_{T}=a$ if and only if it exits the interval by above, that is, on the event $\{\tau_a^+ < \tau_b^-\}$. Using (\ref{L1eq1}) and the fact that $\mathbb{P}_x(\tau_a^+<\tau_b^-)=W(x-b)/W(a-b)$, we obtain 
\begin{align*}
\mathbb{E}_x & \left[\exp\left\{-\int_0^{T} dt f(S_t,X_t) \right\} \bigg| S_{T}=a \right] \\
&= \exp \left\{-\int_x^a ds \overline{N} \left(1-\exp \left\{ -\int_0^\zeta duf(s,s-\mathbf{e}(u))\right\}, H<s-b \right) \right\}.
\end{align*}
Now, for $x\leq z < a$, we know that $S_{T}=z$ implies the process exits the interval by below. From (\ref{Eqgzf}) with $h\equiv 1$, we know that
\begin{align*}
\mathbb{E}_x & \left[ g\left(S_{\tau_b^-} \right) \exp \left\{ - \int_0^{T} dt f(S_t,X_t) \right\}, \tau_{b}^-<\tau_a^+  \right]\\
&= \int_x^{a} dzg(z) \frac{W_f(x,b)}{W_f(z,b)} \overline{N} \left(\exp \left\{-\int_0^{\tilde{\tau}^+_{z-b}} du f(z,z-\mathbf{e}(u)) \right\} , H>z-b\right)
\end{align*}
and therefore
\begin{eqnarray*}
& \ & \mathbb{E}_x \left[ g\left(S_{\tau_b^-} \right) \exp \left\{ - \int_0^{T} dt f(S_t,X_t) \right\} \bigg|  \tau_{b}^- <\tau_a^+  \right] \nonumber \\
&=& \int_x^{a} dzg(z) \frac{W_f(x,b)}{\mathbb{P}_x \left( \tau_{b}^- < \tau_a^+   \right)W_f(z,b)} \overline{N} \left(\exp \left\{-\int_0^{\tilde{\tau}^+_{z-b}} du f(z,z-\mathbf{e}(u)) \right\} , H>z-b\right).
\end{eqnarray*}
Let $\mu_x^{a,b}$ be the conditional law given $\{ \tau_{b}^-< \tau_a^+ \}$. From the last computation we have 
\begin{eqnarray} \label{muxab}
& \ & \mu_x^{a,b} \left( g\left(S_{\tau_{b}^-} \right) \exp \left\{ - \int_0^{T} dt f(S_t,X_t) \right\} \right) \nonumber \\
&=& \int_x^{a} dzg(z) \frac{W_f(x,b)}{\mathbb{P}_x \left(  \tau_{b}^- < \tau_a^+ \right)W_f(z,b)} \overline{N} \left(\exp \left\{-\int_0^{\tilde{\tau}^+_{z-b}} du f(z,z-\mathbf{e}(u)) \right\} , H>z-b\right).
\end{eqnarray}
Taking $f\equiv 0$ we get $W_f(\cdot,b)=W(\cdot-b)$ and since the equation is valid for every measurable and bounded $g$, we can conclude that the law of $S_{\tau_{b}^-}$ under $\mu_x^{a,b}$ has density $$\nu(z):=\nu_x^{a,b}(z)=\frac1{\mathbb{P}_x \left(  \tau_{b}^- < \tau_a^+ \right)} \frac{W(x-b)}{W(z-b)} \overline{N}(H>z-b).$$
On the other side, taking conditional expectation given $S_{ \tau_{b}^-}$ we have
\begin{eqnarray*}
\mu_x^{a,b} \left( g\left(S_{ \tau_{b}^-} \right) \exp \left\{ - \int_0^{T} dt f(S_t,X_t)  \right\} \right)&=&\mu_x^{a,b} \left( g\left(S_{\tau_{b}^-} \right) \mu_x^{a,b} \left(  \exp \left\{ - \int_0^{T} dt f(S_t,X_t) \right\}  \bigg| S_{ \tau_{b}^-} \right) \right) \\
&=& \mu_x^{a,b} \left( g\left(S_{ \tau_{b}^-} \right) K\left(S_{ \tau_{b}^-} \right) \right),
\end{eqnarray*}
for a certain function $K$ depending on $f$. As we have seen before, the latter can be written as $$\mu_x^{a,b} \left( g\left(S_{ \tau_{b}^-} \right) K \left(S_{ \tau_{b}^-} \right) \right)= \int_x^a dz g(z) K(z) \nu(z)$$ Again, since this is valid for every $g$ measurable and bounded comparing with (\ref{muxab}) we conclude 
\begin{eqnarray*}
K(z)&=&\frac{W(z-b)}{W(x-b)} \frac{W_f(x,b)}{W_f(z,b)} \frac{\overline{N} \left(\exp \left\{-\int_0^{\tilde{\tau}^+_{z-b}}du f(z,z-\mathbf{e}(u)) \right\} , H>z-b\right)}{\overline{N}(H>z-b)} .
\end{eqnarray*}
In a similar way, there is a function $\tilde{K}$ such that $$\mathbb{E}_x \left[\exp \left\{ - \int_0^{T} dt f(S_t,X_t) \right\} \bigg| S_{\tau_{b}^-}, \tau_{b}^-< \tau_a^+   \right]=\tilde{K}\left(S_{ \tau_{b}^-} \right).$$ This follows since the event $\{\tau_{b}^- < \tau_a^+ \}$ coincides almost surely with $\{S_{T} \in [x,a) \}$ because of the absence of positive jumps. Hence,

\begin{eqnarray*}
\int_{x}^a dz g(z) K(z) \nu(z) &=& \mu_x^{a,b} \left( g\left(S_{ \tau_{b}^-} \right) \exp \left\{ - \int_0^{T} dt f(S_t,X_t)  \right\} \right) \\
&=& \frac{\mathbb{E}_x \left[ g\left(S_{\tau_{b}^-} \right) \exp \left\{ - \int_0^{T} dt f(S_t,X_t)  \right\} ,  \tau_{b}^- < \tau_a^+ \right]} {\mathbb{E}_x \left[1_{\{ \tau_{b}^- < \tau_a^+ \}} \right]} \\
&=& \frac{\mathbb{E}_x \left[ g\left(S_{\tau_{b}^-} \right) 1_{\{\tau_{b}^- < \tau_a^+   \}} \mathbb{E}_x \left[ \exp \left\{ - \int_0^{T} dt f(S_t,X_t) \right\} \bigg| S_{ \tau_{b}^-}, \tau_{b}^- < \tau_a^+   \right] \right]} {\mathbb{E}_x \left[1_{\{ \tau_{b}^- < \tau_a^+ \}} \right]} \\
&=& \frac{\mathbb{E}_x \left[ g\left(S_{\tau_{b}^-} \right) 1_{\{ \tau_{b}^- < \tau_a^+ \}} \tilde{K} \left(S_{ \tau_{b}^-} \right) \right]} {\mathbb{E}_x \left[1_{\{ \tau_{b}^- <\tau_a^+  \}} \right]}\\
&=& \mu_x^{a,b} \left(g\left(S_{ \tau_{b}^-} \right) \tilde{K}\left(S_{ \tau_{b}^-} \right) \right) \\
&=& \int_{x}^a dz g(z) \tilde{K}(z) \nu(z).
\end{eqnarray*}

Since the latter is valid for all $g$ measurable and bounded, it implies that in the case the process exits by below, the functional we were looking for coincides with the function $K$ a.s. In other words, for $z\in [x,a)$,
\begin{align*}
\mathbb{E}_x &\left[\exp\left\{-\int_0^{T} dt f(S_t,X_t) \right\} \bigg|  S_{T}=z \right] \\
& = \frac{W(x-b)}{W(z-b)} \frac{W_f(x,b)}{W_f(z,b)} \frac{\overline{N} \left(\exp \left\{-\int_0^{\tilde{\tau}^+_{z-b}}du f(z,z-\mathbf{e}(u)) \right\} , H>z-b\right)}{\overline{N}(H>z-b)}
\end{align*}
This concludes the proof.
\end{proof}

\begin{proof}[Proof of Theorem \ref{Th3}]

Take $b\in \mathbb{R}$ fixed. We already know that $\forall a>x>b$, $$\frac{W_f(x,b)}{W_f(a,b)}=\frac{W^{(f)}(x,b)}{W^{(f)}(a,b)}$$ and therefore, $$W^{(f)}(x,b)=\frac{W^{(f)}(a,b)}{W_f(a,b)} W_f(x,b).$$ Since this holds for every $a>x$ and $W_f(x,b)>0$, the quotient $\frac{W^{(f)}(a,b)}{W_f(a,b)}$ is constant in $[x,\infty)$, say $$\frac{W^{(f)}(a,b)}{W_f(a,b)}=\gamma_x.$$ Moreover, if we take $x_1<x_2<a$ such that $b<x_1<x_2<a$, using the previous argument we have $\frac{W^{(f)}(a,b)}{W_f(a,b)}=\gamma_{x_1}$ and also $\frac{W^{(f)}(a,b)}{W_f(a,b)}=\gamma_{x_2}$. Therefore, this constant does not depend on $x$ and thus we can write $$W^{(f)}(x,b)=\gamma W_f(x,b), \ \ \ x>b,$$ that is, $W^{(f)}(\cdot,b)$ and $W_f(\cdot,b)$ differ by a constant.

A similar argument can be used for $Z_f$. With $b$ fixed, from Theorem \ref{Th1} we have for every $a>x>b$ that $$Z^{(f)}(x,b)-\frac{W^{(f)}(x,b)}{W^{(f)}(a,b)}Z^{(f)}(a,b)=Z_f(x,b)-\frac{W_f(x,b)}{W_f(a,b)}Z_f(a,b).$$ Since we already know $W^{(f)}(x,b)/W^{(f)}(a,b)$ equals $W_f(x,b)/W_f(a,b)$, we can rewrite the last expression as $$\frac{Z^{(f)}(x,b)-Z_f(x,b)}{W_f(x,b)} = \frac{Z^{(f)}(a,b)-Z_f(a,b)}{W_f(a,b)}.$$ The latter implies that the quotient $\frac{Z^{(f)}(a,b)-Z_f(a,b)}{W_f(a,b)}$ is constant in $[x,\infty)$ for any $x>b$ and as before, the constant does not depend on $x$. Therefore, $$\frac{Z^{(f)}(x,b)-Z_f(x,b)}{W_f(x,b)}=\alpha, \ \ \ \forall x \in (b,\infty)$$ and we obtain
\begin{eqnarray*}
Z^{(f)}(x,b)&=&\alpha W_f(x,b) + Z_f(x,b) \\
&=& W_f(x,b) \left(1 + \alpha +\int_x^\infty dz \frac{\overline{N} \left(\exp \left\{-\int_0^{\tilde{\tau}_{z-b}^+} du f(z,z-\mathbf{e}(u)) \right\},H>z-b \right)}{W_f(z,b)} \right).
\end{eqnarray*}
Hence, we can change the number 1 in the original definition of $Z_f$ in a proper way so $Z^{(f)}(\cdot,b)$ and $Z_f(\cdot,b)$ coincide.
\end{proof}

\bibliographystyle{plain}
\bibliography{scalef}

\end{document}